\documentclass[11pt]{amsart}
\usepackage[text={450pt,575pt},headheight=9pt,centering]{geometry}

\usepackage{placeins}

\usepackage{amsthm}
\usepackage{array}
\usepackage{adjustbox}
\usepackage{booktabs}
\usepackage{placeins}

\usepackage{pictexwd,dcpic}
\usepackage{latexsym}
\usepackage[all,cmtip]{xy}
\usepackage[pagebackref]{hyperref}
\usepackage[nameinlink,capitalise]{cleveref}
\usepackage{times}
\usepackage{braket}
\usepackage[english]{babel}
\usepackage[utf8]{inputenc}
\usepackage{faktor}
\usepackage{paralist,url,verbatim}
\usepackage{amscd}
\usepackage[T1]{fontenc}
\usepackage{xspace}
\usepackage{comment}
\usepackage{amsmath,amsthm,amssymb,amsfonts}
\usepackage{mathtools}
\usepackage{enumitem}
\usepackage{wasysym}
\usepackage{booktabs}
\usepackage[usenames,dvipsnames]{xcolor}
\numberwithin{equation}{section}

\definecolor{darkblue}{RGB}{0,0,160}
\definecolor{darkgreen}{RGB}{0,80,0}
\hypersetup{
    colorlinks=true,
    citecolor=darkblue,
    linkcolor=darkblue,
    urlcolor=darkblue,
    filecolor=darkblue,
}

\newcommand{\ktrash}[1]{}

\newcommand{\change}[1]{}
\definecolor{cKlaus}{rgb}{0.15,0.40,0.03}
\definecolor{cKLinkGBU}{rgb}{1,0,0}  
\definecolor{cKLink}{rgb}{0.6,0.2,0.3}
\definecolor{cALink}{rgb}{0,0.3,0}
\usepackage[textsize=tiny]{todonotes}

\usepackage{tikz}
\usetikzlibrary{decorations.markings,intersections,positioning,calc}

  \tikzset{mylabel/.style  args={at #1 #2  with #3}{
    postaction={decorate,
    decoration={
      markings,
      mark= at position #1
      with  \node [#2] {#3};
 } } } }

\theoremstyle{plain}
\newtheorem{theorem}{Theorem}[section]

\newtheorem{corollary}[theorem]{Corollary}
\newtheorem{lemma}[theorem]{Lemma}
\newtheorem{proposition}[theorem]{Proposition}

\theoremstyle{definition}

\newtheorem{definition}[theorem]{Definition}

\newtheorem{example}[theorem]{Example}

\newcommand{\NN}{\ensuremath{\mathbb{N}}}

\newcommand{\RR}{\ensuremath{\mathbb{R}}}
\newcommand{\CC}{\ensuremath{\mathbb{C}}}

\newcommand{\ZZ}{\ensuremath{\mathbb{Z}}}

\newcommand{\bfa}{\ensuremath{\mathbf{a}}}
\newcommand{\bfb}{\ensuremath{\mathbf{b}}}

\newcommand{\bfk}{\ensuremath{\mathbf{k}}}

\newcommand{\bfx}{\ensuremath{\mathbf{x}}}

\newcommand{\kenditem}{\vspace{-1ex}\end{itemize}}

\newcommand{\kitem}{\begin{itemize}\vspace{-2ex}}

\renewcommand{\1}{$1$-rigid}

\newcommand{\KS}{\operatorname{KS}}

\newcommand{\newt}{\Delta}

\usepackage{tikz-cd}

\newcommand{\Def}{\operatorname{Def}}

\title{Unobstructedness of Affine Terminal Gorenstein Toric Fourfolds}

\author{Matej Filip${}^1$}
\address{University of Ljubljana \& Institute of Mathematics, Physics and Mechanics, Trzaska cesta 25, Ljubljana, Slovenia}
\email{matej.filip@fe.uni-lj.si}
\thanks{${}^1$Supported by Slovenian Research Agency program P1-0222 and grants J1-60011, J1-70017.}
\author{Alja\v{z} Zalar${}^2$}
\address{University of Ljubljana \&
Institute of Mathematics, Physics and Mechanics\\ Ve\v cna pot 113\\
1000 Ljubljana\\
Slovenia}
\email{aljaz.zalar@fri.uni-lj.si}
\thanks{${}^2$Supported by Slovenian Research Agency program P1-0288 and grants J1-50002, J1-60011, and J1-70017}

\begin{document}

\begin{abstract}
We prove that every affine terminal Gorenstein toric variety of
dimension at most four is unobstructed. In dimension four, the
obstruction space need not vanish; instead, we determine the possible
homogeneous obstruction degrees and construct simultaneous
two-parameter deformations which eliminate the remaining potential
obstructions.
\end{abstract}

\keywords{deformation theory; toric singularities; terminal singularities; unobstructedness}
\subjclass[2020]{14B07, 14M25, 13D10, 14B05}
\maketitle

\section{Introduction}

Deformation theory provides a fundamental tool for relating singular
algebraic varieties to smooth ones and for understanding the local
structure of moduli spaces. A variety \(X\) is called
\emph{unobstructed} if its deformation functor \(\Def_X\) is formally
smooth, that is, if every infinitesimal deformation of \(X\) lifts
across every small extension of local Artinian algebras. In
particular, every first-order deformation can then be extended to a
formal deformation of all orders. If \(T_X^1\) is finite-dimensional
and \(\Def_X\) admits a hull, unobstructedness is equivalent to the
smoothness of the miniversal base. Thus unobstructedness is important
both for constructing smoothings and for controlling the local
geometry of deformation and moduli spaces.

Unobstructedness is known in several important geometric settings.
The Bogomolov--Tian--Todorov theorem gives unobstructedness for smooth
Calabi--Yau manifolds, with \(T^1\)-lifting approaches developed by
Ran and Kawamata; see \cite{Ran92b,Kaw92}. For singular Calabi--Yau
varieties, unobstructedness results under various assumptions were
obtained by Kawamata, Ran, Tian, and Namikawa; see
\cite{Kaw92,Ran92a,Tia92,Nam94}. More recent generalisations appear in \cite{FL24b,FL25b}. On the Fano side, every smooth Fano manifold is unobstructed. More generally, Sano proved that every weak Fano manifold is unobstructed \cite{San14}.

For affine toric varieties, unobstructedness can often be established by proving the vanishing of the obstruction space \(T^2_X\). Altmann and Sletsj{\o}e gave a combinatorial description of the relevant Andr\'e--Quillen cohomology groups in \cite{AS98}, while Altmann and van Straten subsequently established polyhedral vanishing criteria for \(T^2_X\), yielding unobstructedness for certain classes of affine Gorenstein toric singularities in \cite{AvS00}. 

The principal result of this paper is that every affine terminal Gorenstein toric variety of dimension at most four is unobstructed; see Theorem~\ref{th term un}. In dimensions at most three, this follows from the vanishing of the obstruction space \(T_X^2\); see Lemma~\ref{lem for terminal dim 3}. In dimension four, however, \(T_X^2\) need not vanish; see Example~\ref{ex eight vertex T2}. Thus, in this case, unobstructedness cannot be deduced solely from the vanishing of the obstruction space.

This unobstructedness result is also relevant to mirror symmetry. It was shown in \cite{Fil26}, for Laurent polynomials in two variables, that if $f$ mutates to $g$ and \(X_{\newt(g)}\) is unobstructed, then the deformation of \(X_{\newt(f)}\) corresponding to the sequence of mutations from \(f\) to \(g\) is unobstructed. The results of the present paper therefore suggest a four-dimensional analogue. Namely, if a Laurent polynomial \(f\) in three variables can be mutated to a Laurent polynomial \(g\) such that \(\newt(g)\) is empty, or equivalently such that \(X_{\newt(g)}\) is terminal (see Definition~\ref{def emp}), then one expects the corresponding mutation deformation of \(X_{\newt(f)}\) to be unobstructed. Establishing this statement requires extending the mutation--deformation comparison of \cite{Fil26}, which was proved only in affine dimension three, to the four-dimensional setting.

We now explain the strategy for proving unobstructedness in dimension four. Let \(X_P\) be a four-dimensional affine terminal Gorenstein toric variety. Then \(P\) is an empty three-dimensional lattice polytope. By Proposition~\ref{th gen},
$
T^2_{X_P}(-r)=0
$
for every \(r\neq R^*\), where \(R^*\) denotes the Gorenstein degree. Thus \(T^2_{X_P}(-R^*)\) is the only homogeneous component of the obstruction space which may be nonzero.

On the other hand, Proposition~\ref{prop empty T1 degrees} determines all nonzero homogeneous components of the tangent space \(T^1_{X_P}\). Besides the component in degree \(-R^*\), they are associated with square facets \(F\subset P\) and occur in degrees
$
-\bigl(R^*-ns_F\bigr),$  $n\geq 1,$
where \(s_F\in\widetilde M\) is the primitive generator of the ray of \(\sigma^\vee\) corresponding to \(F\).

We first consider the finite-dimensional primitive tangent subspace
corresponding to \(n=1\). Its square-facet parameters have degrees
$
m_F:=R^*-s_F, 
$ 
and let \(t_F\) denote the deformation parameter corresponding to a nonzero generator $ \xi_F\in T^1_{X_P}(-m_F).$ Thus $\deg(t_F)=m_F.$
Lemma~\ref{lem square relation} determines exactly when a sum of these
primitive tangent parameter degrees can reach the exceptional
obstruction degree \(R^*\). It shows that the only possibility is
$
m_{F_1}+m_{F_2}=R^*
$
for two parallel square facets \(F_1\) and \(F_2\).

The cohomological vanishing criterion therefore leaves precisely one
type of possible mixed obstruction. To eliminate it, in
Section~\ref{sec compatible two parameter} we construct a simultaneous
two-parameter deformation associated with two compatible deformation
pairs. Applied to the two parallel square facets above, this
construction proves that the coefficient of the possible mixed
quadratic obstruction
$
t_{F_1}t_{F_2}
$
vanishes, even when its target
\(T^2_{X_P}(-R^*)\) is nonzero.

A further difficulty is that \(X_P\) is generally non-isolated and thus 
$
T^1_{X_P}
$
may be infinite-dimensional. Consequently, the full deformation
functor cannot in general be treated by invoking a noetherian
miniversal base. We instead construct compatible formal deformations
over finite-dimensional power-series rings, containing the tangent
directions with \(n\leq N\). The higher tangent directions are
obtained from the primitive ones using the 
module structure on \(T^1_{X_P}\), and the corresponding
families are constructed by a homogeneous substitution in both the
defining equations and their lifted relations. Finally, an induction
over Artinian small extensions shows directly that
\(\Def_{X_P}\) is formally smooth. All these steps are carried out in
the proof of Theorem~\ref{th term un}.

The paper is organised as follows. In the preliminaries, we recall the
graded tangent and obstruction spaces of an affine Gorenstein toric
variety. In Section~\ref{sec compatible two parameter}, we construct
simultaneous two-parameter deformations associated with compatible
deformation pairs. In Section~\ref{sec 4 affine gor ter}, we compute
the tangent and obstruction degrees of affine terminal Gorenstein toric
varieties and prove their unobstructedness in dimensions at most three.
Finally, in Section~\ref{sec 5 unobstructedness}, we prove that every
affine terminal Gorenstein toric fourfold is unobstructed; see
Theorem~\ref{th term un}.

\section{Preliminaries}

\subsection{The setup}\label{sub the setup}

We work over the field \(\mathbb{C}\), which is algebraically closed and of
characteristic zero. Let
$
N\cong \mathbb{Z}^n
$
be a lattice of rank \(n\), and let
$
M:=\operatorname{Hom}_{\mathbb{Z}}(N,\mathbb{Z})
$
be its dual lattice. We write
\[
N_{\mathbb{R}}:=N\otimes_{\mathbb{Z}}\mathbb{R},
\qquad
M_{\mathbb{R}}:=M\otimes_{\mathbb{Z}}\mathbb{R}.
\]

Let \(P\subset N_{\mathbb{R}}\) be a full-dimensional lattice polytope
with vertices \(v^1,\dots,v^p\). Set
\[
\widetilde N:=N\oplus\mathbb{Z},
\qquad
\widetilde M:=M\oplus\mathbb{Z}.
\]
By embedding \(P\) at height \(1\), we obtain the strongly convex rational
polyhedral cone
\begin{equation}\label{polytope}
\sigma
:=
\operatorname{cone}(a^1,\ldots,a^p)
=
\left\{
\sum_{i=1}^p\lambda_i a^i
\;\middle|\;
\lambda_i\in\mathbb{R}_{\geq 0}
\right\}
\subset \widetilde N_{\mathbb{R}},
\end{equation}
where
\[
a^i:=(v^i,1)\in \widetilde N,
\qquad i=1,\dots,p,
\]
and
$
\widetilde N_{\mathbb{R}}
:=
\widetilde N\otimes_{\mathbb{Z}}\mathbb{R}
$. 
The dual cone is
\[
\sigma^\vee
:=
\left\{
r\in\widetilde M_{\mathbb{R}}
\;\middle|\;
\langle n,r\rangle\geq 0
\text{ for every }n\in\sigma
\right\},
\]
where
$
\widetilde M_{\mathbb{R}}
:=
\widetilde M\otimes_{\mathbb{Z}}\mathbb{R}.
$
We consider the affine semigroup
$
S_P:=\sigma^\vee\cap\widetilde M
$
and the associated affine Gorenstein toric variety
$
X_P:=\operatorname{Spec}\mathbb{C}[S_P].
$
Conversely, every affine Gorenstein toric variety without torus factors is
isomorphic to \(X_P\) for some full-dimensional lattice polytope \(P\).

We denote the coordinate projections by
\[
\pi_M:\widetilde M\longrightarrow M,
\qquad
\pi_{\mathbb{Z}}:\widetilde M\longrightarrow\mathbb{Z}.
\]
These projections will be used throughout the paper.

\begin{definition}\label{d:eta(c)}
Let \(Q\subset N_{\mathbb{R}}\) be a polytope and let
\(c\in M_{\mathbb{R}}\). Choose a vertex \(v_Q(c)\) of \(Q\) at which
the function
$
\langle c,\cdot\rangle:Q\longrightarrow\mathbb{R}
$
attains its minimum. We define
\[
\eta_Q(c)
:=
-\min_{v\in Q}\langle v,c\rangle
=
-\langle v_Q(c),c\rangle.
\]
Although the vertex \(v_Q(c)\) need not be unique, the value
\(\eta_Q(c)\) is independent of this choice.
\end{definition}

The Hilbert basis of \(S_P=\sigma^\vee\cap\widetilde M\) can be written as
\begin{equation}\label{eg hilbbas}
H_P
=
\left\{
s_1=(c_1,\eta_P(c_1)),\ldots,
s_r=(c_r,\eta_P(c_r)),R^*
\right\},
\end{equation}
where
$
R^*:=(\underline 0,1),
$
with the convention that \(R^*\) is omitted if it is decomposable in
\(S_P\); see \cite[Section~4.3]{Alt97}. Here
\(c_1,\ldots,c_r\in M\setminus\{\underline 0\}\) are pairwise distinct
and uniquely determined up to ordering. The element \(R^*\) is called
the \emph{Gorenstein degree}.

\begin{example}\label{ex eight vertex polytope hilbert basis}{\footnote{The  file with Magma code for this example numerical computations can be found on the link \url{https://github.com/matfilip/Unobstructedness}.}} 
Let \(N=\ZZ^3\), with coordinates \((x,y,z)\), and let
\[
\begin{aligned}
P=\operatorname{conv}\{&v^1=(0,0,0),\ v^2=(1,0,0),\
 v^3=(0,1,0),\ v^4=(1,1,0),\\
&v^5=(0,0,1),\ v^6=(1,0,1),\
 v^7=(1,1,1),\ v^8=(2,1,1)\}.
\end{aligned}
\]
A direct facet computation gives
\[
\begin{gathered}
x\geq0,\qquad y\geq0,\qquad z\geq0,\qquad
1+x-y-z\geq0,\\
1-x+y\geq0,\qquad 1-x+z\geq0,\qquad
1-y\geq0,\qquad 1-z\geq0.
\end{gathered}
\]
These inequalities force \(0\leq y,z\leq1\). Checking the four possible
pairs \((y,z)\in\{0,1\}^2\) shows that the lattice points of \(P\) are
precisely \(v^1,\ldots,v^8\). In particular, \(P\) is empty, cf.\ Definition \ref{def emp}.

{The corresponding facets, listed in the same order as the inequalities
above, are
\[
\renewcommand{\arraystretch}{1.2}
\begin{array}{c@{\qquad}c}
x=0
&
\operatorname{conv}\{v^1,v^3,v^5\},
\\
y=0
&
F_{y,0}:=\operatorname{conv}\{v^1,v^2,v^6,v^5\},
\\
z=0
&
F_{z,0}:=\operatorname{conv}\{v^1,v^2,v^4,v^3\},
\\
1+x-y-z=0
&
\operatorname{conv}\{v^3,v^5,v^7\},
\\
1-x+y=0
&
\operatorname{conv}\{v^2,v^6,v^8\},
\\
1-x+z=0
&
\operatorname{conv}\{v^2,v^4,v^8\},
\\
y=1
&
F_{y,1}:=\operatorname{conv}\{v^3,v^4,v^8,v^7\},
\\
z=1
&
F_{z,1}:=\operatorname{conv}\{v^5,v^6,v^8,v^7\}.
\end{array}
\]
Thus \(P\) has four triangular facets and four square facets, the latter
being \(F_{y,0},F_{y,1},F_{z,0}\), and \(F_{z,1}\).

Writing \(ij\) for the edge \([v^i,v^j]\), the edge set is
\[
\mathcal E(P)
=
\{12,13,15,24,26,28,34,35,37,48,56,57,68,78\}.
\]
Together with the vertices \(v^1,\ldots,v^8\), these are all the
nonempty proper faces of \(P\).}

The primitive ray generators of \(\sigma^\vee\) are
\[
\begin{array}{llll}
s_1=(1,0,0,0),
& s_2=(0,1,0,0),
& s_3=(0,0,1,0),
& s_4=(1,-1,-1,1),\\[1mm]
s_5=(-1,1,0,1),
& s_6=(-1,0,1,1),
& s_7=(0,-1,0,1),
& s_8=(0,0,-1,1).
\end{array}
\]
{
Here \(s_i\) is the primitive support element corresponding to the
\(i\)-th facet in the preceding list.}
Moreover,
\[
R^*=(0,0,0,1)=s_2+s_7=s_3+s_8.
\]
To determine the Hilbert basis, consider the following triangulation of
\(\sigma^\vee\):
\[
\begin{aligned}
&\operatorname{cone}(R^*,s_6,s_3,s_7),
&&\operatorname{cone}(R^*,s_1,s_3,s_7),
&&\operatorname{cone}(R^*,s_1,s_3,s_2),\\
&\operatorname{cone}(s_8,R^*,s_6,s_7),
&&\operatorname{cone}(s_8,R^*,s_1,s_2),
&&\operatorname{cone}(s_4,R^*,s_1,s_7),\\
&\operatorname{cone}(s_4,s_8,R^*,s_7),
&&\operatorname{cone}(s_4,s_8,R^*,s_1),
&&\operatorname{cone}(s_5,s_8,R^*,s_2),\\
&\operatorname{cone}(s_5,s_8,R^*,s_6),
&&\operatorname{cone}(s_5,R^*,s_3,s_2),
&&\operatorname{cone}(s_5,R^*,s_6,s_3).
\end{aligned}
\]
The determinant of the four generators of each cone is \(\pm1\), so this
triangulation is unimodular. Hence every lattice point of
\(\sigma^\vee\) is generated by \(s_1,\ldots,s_8,R^*\). Since
\(R^*=s_2+s_7=s_3+s_8\), it is decomposable, whereas the \(s_i\) are
primitive generators of extremal rays and hence indecomposable.
Therefore
\[
H_P=\{s_1,\ldots,s_8\}.
\]
\end{example}

\subsection{Graded deformation theory}
\label{sub graded deformation theory}

We briefly recall from \cite[Chapters~2 and~3]{Ser06} the basic notions
concerning deformation functors, small extensions, tangent and
obstruction spaces, and formal smoothness.

Let \(X\) be an affine scheme over \(\CC\), and let
\(\Def_X\) denote its deformation functor. We denote by \(T_X^1\) its
tangent space, so that
\[
T_X^1
\cong
\Def_X\bigl(\CC[\epsilon]/(\epsilon^2)\bigr).
\]

{
Let \((B,\mathfrak m_B)\) be a complete local \(\CC\)-algebra. A
formal deformation of \(X\) over \(B\) is a compatible system of
deformations over the Artinian quotients
\(B/\mathfrak m_B^{k+1}\). Its first-order truncation over
\(B/\mathfrak m_B^2\) determines the Kodaira--Spencer map
\[
\KS_{\mathcal X}\colon
(\mathfrak m_B/\mathfrak m_B^2)^\ast
\longrightarrow T_X^1.
\]
Equivalently, the image of a tangent vector is the class of the
first-order deformation obtained by pulling back along the
corresponding homomorphism
\(B/\mathfrak m_B^2\to\CC[\epsilon]/(\epsilon^2)\).
}

We also denote by \(T_X^2\) a natural obstruction space for
\(\Def_X\). 
More precisely, let
\[
0\longrightarrow J
\longrightarrow B'
\longrightarrow B
\longrightarrow0
\]
be a small extension of local Artinian \(\CC\)-algebras, meaning that
$\mathfrak m_{B'}J=0.$
For every deformation \(\mathcal X_B\) of \(X\) over \(B\), there is
an obstruction class
\[
\operatorname{ob}(\mathcal X_B,B')
\in
T_X^2\otimes_{\CC}J
\]
whose vanishing is equivalent to the existence of a lifting of
\(\mathcal X_B\) to \(B'\). If this obstruction vanishes, the set of
isomorphism classes of liftings is a torsor under
$
T_X^1\otimes_{\CC}J.
$

We say that \(X\) is \emph{unobstructed} if its deformation functor
\(\Def_X\) is formally smooth, that is, if the natural map
$
\Def_X(B')
\longrightarrow
\Def_X(B)
$
is surjective for every small extension \(B'\to B\).

We now specialise to \(X=X_P\). The torus action on \(X_P\) induces
\(\widetilde M\)-gradings
\[
T^i_{X_P}
=
\bigoplus_{r\in\widetilde M}
T^i_{X_P}(-r),
\qquad i=1,2.
\]
Moreover, the obstruction theory is torus-equivariant. Consequently,
obstruction classes decompose into their \(\widetilde M\)-homogeneous
components, and the equations governing every finite-dimensional
graded deformation problem may be chosen
\(\widetilde M\)-homogeneous.

{For a graded deformation, a base parameter of degree \(r\) has
Kodaira--Spencer class in \(T^1_{X_P}(-r)\).}

In the following two subsections, we recall the combinatorial
descriptions of the homogeneous components of \(T^1_{X_P}\) and
\(T^2_{X_P}\). These descriptions will allow us to determine the
possible tangent parameter degrees and the degrees in which
obstructions can occur.

\subsection{The tangent space}

Let \(\sigma\) be as in \eqref{polytope}. For
\(r\in\widetilde M\) and \(j=1,\ldots,p\), recall from
\cite{AS98,Alt00} the subsets
\[
K_j^r
:=
\left\{
m\in S_P
\ \middle|\
\langle a^j,m\rangle<\langle a^j,r\rangle
\right\},
\qquad
S_P=\sigma^\vee\cap\widetilde M.
\]
More generally, for every nonzero face \(\tau\preceq\sigma\), define
\[
K_\tau^r
:=
\bigcap_{a^j\in\tau}K_j^r,
\]
and, for the zero face, put
$
K_0^r:=S_P.
$
Thus, if \(\gamma\preceq\tau\), then
$
K_\tau^r\subseteq K_\gamma^r.
$

The complex \(\operatorname{span}(K^r)_\bullet\) of free abelian
groups is defined by
\[
\operatorname{span}(K^r)_{-k}
:=
\bigoplus_{\substack{\tau\preceq\sigma\\\dim\tau=k}}
\operatorname{span}_{\ZZ}(K_\tau^r),
\]
with differentials induced by the inclusions of faces and the usual
incidence signs. We write
\[
\operatorname{span}(K^r)_\bullet^*
:=
\operatorname{Hom}_{\ZZ}
\bigl(\operatorname{span}(K^r)_\bullet,\ZZ\bigr).
\]

Since \(X_P\) is Gorenstein, and hence Gorenstein in codimension two,
we have
\begin{equation}
\label{eq tk eq}
T^k_{X_P}(-r)
\cong
H^k\left(
\operatorname{span}(K^r)_\bullet^*
\otimes_{\ZZ}\CC
\right),
\qquad
k=1,2;
\end{equation}
see \cite[Propositions~5.3 and~5.4]{AS98} and
\cite[Theorem~2.1]{Alt00}. Here \(T^k_{X_P}(-r)\) denotes the
homogeneous component of degree \(-r\in\widetilde M\). The torus
action induces the decomposition
\[
T^k_{X_P}
=
\bigoplus_{r\in\widetilde M}T^k_{X_P}(-r).
\]

We next recall the more explicit description of \(T^1_{X_P}\)
obtained in \cite{Alt00}. For \(r\in\widetilde M\), write
\[
\varphi_r(v)
:=
\langle r,(v,1)\rangle,
\qquad
v\in N_{\RR},
\]
and consider the cross-section
\[
Q(r)
:=
\sigma\cap
\left\{
x\in\widetilde N_{\RR}
\ \middle|\
\langle r,x\rangle=1
\right\}.
\]

Let \(\mathcal E_c(r)\) denote the set of compact edges of \(Q(r)\).
For every \(E\in\mathcal E_c(r)\), choose an orientation and denote
its direction vector by \(d_E\). For every compact two-dimensional
face \(\varepsilon\subset Q(r)\), choose incidence signs
$
\delta_\varepsilon(E)\in\{0,\pm1\}
$
such that
\[
\sum_{E\subset\varepsilon}
\delta_\varepsilon(E)d_E=0.
\]
Define
\[
V(r)
:=
\left\{
(t_E)_{E\in\mathcal E_c(r)}
\ \middle|\
\sum_{E\subset\varepsilon}
\delta_\varepsilon(E)t_Ed_E=0
\text{ for every compact two-face }\varepsilon
\right\}.
\]

If \(E\in\mathcal E_c(r)\) is induced by an edge
$
[v^i,v^j]\subset P,
$
let \(\ell(E)\) denote the lattice length of this edge. Define
\[
V'(r)
:=
\left\{
(t_E)_E\in V(r)
\ \middle|\
t_E\neq0
\Longrightarrow
1\leq
\varphi_r(v^i)
=
\varphi_r(v^j)
\leq\ell(E)
\right\}.
\]
We write
\[
V(r)_{\CC}:=V(r)\otimes_{\RR}\CC,
\qquad
V'(r)_{\CC}:=V'(r)\otimes_{\RR}\CC.
\]

\begin{theorem}[{\cite[Theorem~4.1]{Alt00}}]
\label{th Altmann T1}
The following statements hold.
\begin{enumerate}
\item Suppose that
$\varphi_r(v)\leq1$
$\text{for every vertex }v\text{ of }P$.
Then
\[
T^1_{X_P}(-r)
\cong
V(r)_{\CC}/\CC\cdot \mathbf 1,
\]
where
$
\mathbf 1=(1,\ldots,1)
$
is the homothety direction.

\item Suppose that
$
\max_{v\in P}\varphi_r(v)\geq2.
$
Then
\[
T^1_{X_P}(-r)
\cong
V'(r)_{\CC}.
\]
In particular, an edge coordinate can be nonzero only on an edge
\(E=[v^i,v^j]\) satisfying
\[
\varphi_r(v^i)
=
\varphi_r(v^j)
=
q_E,
\qquad
1\leq q_E\leq\ell(E).
\]
\end{enumerate}
\end{theorem}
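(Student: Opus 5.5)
The statement is Altmann's computation of \(T^1_{X_P}(-r)\) (\cite[Theorem~4.1]{Alt00}). My plan is to derive it from the cohomological description \eqref{eq tk eq} with \(k=1\), making the complex \(\operatorname{span}(K^r)_\bullet^*\) explicit in low degrees.

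\emph{Step 1: reduce \(H^1\) to a linear-algebra problem.} Since \(\operatorname{span}(K^r)_{-k}\) only involves faces of dimension \(k\), the group \(H^1\) depends only on the rays \(a^j\) (dimension \(1\)), the two-faces \(\langle a^i,a^j\rangle\) (edges of \(P\)), and the three-faces (two-faces of \(P\)). For each \(j\) with \(K_j^r\neq S_P\), the set \(K_j^r\) is a proper subset. Its span is either all of \(\widetilde M\) or a corank-one sublattice. Dualising, a \(1\)-cochain is a tuple of linear forms on \(\operatorname{span}K_j^r\). The cocycle condition says that two such forms agree on \(\operatorname{span}K^r_{ij}\) whenever \([v^i,v^j]\) is an edge. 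Modulo coboundaries (global forms on \(\widetilde M\)), I expect the difference of the forms attached to the endpoints of an edge to be a multiple \(t_E\) of the functional vanishing on \(\operatorname{span} K^r_{ij}\). This reduces a cocycle to an edge-weight vector \((t_E)\).

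\emph{Step 2: identify edges and closure conditions.} Each edge \(E=[v^i,v^j]\) of \(P\) with \(\varphi_r(v^i),\varphi_r(v^j)\) not both \(\le 0\) gives a piece of \(Q(r)\). For vertices with \(\varphi_r(v)>0\), the point \(a/\varphi_r(v)\) is a vertex of \(Q(r)\), and compact edges of \(Q(r)\) correspond to edges of \(P\) with both endpoints of positive \(\varphi_r\). The two-face condition from \(\operatorname{span}(K^r)_{-2}\) is the requirement that, going around a compact two-face \(\varepsilon\), the forms return to themselves. This is exactly \(\sum_{E\subset\varepsilon}\delta_\varepsilon(E)t_Ed_E=0\), so \(H^1\) maps onto \(V(r)\) modulo what is forced.

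\emph{Step 3: split into the two cases.}
\begin{enumerate}
\item If \(\varphi_r\le1\) on all vertices, then \(K_j^r\) consists only of elements \(m\) with \(\langle a^j,m\rangle\le 0\), which forces \(\langle a^j,m\rangle=0\). So each \(K_j^r\) is the face semigroup \(a_j^\perp\cap S_P\), and all spans are saturated sublattices. No lattice-length constraint appears. The global coboundaries then contribute exactly the trivial direction \(\mathbf 1\), which gives \(V(r)_\CC/\CC\cdot\mathbf 1\).
\item If some \(\varphi_r(v)\ge2\), I would show two things. First, the homothety direction is no longer available as a coboundary contribution, because the relevant global form cannot be realised. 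Second, for an edge with \(\varphi_r(v^i)\ne\varphi_r(v^j)\), or with common value \(q_E>\ell(E)\), we get \(\operatorname{span}K^r_{ij}=\operatorname{span}K^r_i\) or \(=\operatorname{span}K^r_j\), which forces \(t_E=0\). For the second claim, the lattice points on the segment between \(a^i/q\) and \(a^j/q\) are what make \(K_i^r\cap K_j^r\) drop rank precisely when \(1\le q_E\le\ell(E)\).
\end{enumerate}

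The main obstacle is this rank computation of \(\operatorname{span}_\ZZ(K_i^r\cap K_j^r)\) versus \(\operatorname{span}_\ZZ K_i^r\) in case (2). The difficulty is that \(K^r_{ij}\) is not a face semigroup but a "staircase" cut out by two strict inequalities. To handle it, I would restrict to the two-dimensional cone \(\operatorname{cone}(a^i,a^j)\) and its dual. There \(\ell(E)\) controls the lattice points strictly below both levels \(\langle a^i,r\rangle,\langle a^j,r\rangle\), and a direct two-dimensional lattice count should give the criterion \(\varphi_r(v^i)=\varphi_r(v^j)\le\ell(E)\).
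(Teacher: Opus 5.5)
The paper gives no proof of this statement; it is quoted from \cite[Theorem~4.1]{Alt00}. Your route through \eqref{eq tk eq} is the natural one, but there is a genuine gap: the step that separates case (1) from case (2) is missing, and the reason you give for it is wrong.

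At a vertex with $\varphi_r(v^j)=1$, the cochain component lies in $\widetilde N_{\CC}/\CC a^j$. For an edge $E$ with both endpoints at level one, $\operatorname{span}K^r_E=\widehat E^\perp$ has codimension two. So ``the difference of the forms is a multiple of the functional vanishing on $\operatorname{span}K^r_{ij}$'' is not defined. You need a normalisation. Since $\langle r,a^j\rangle=1$, each level-one component has a unique lift to a hyperplane $\{\langle r,\cdot\rangle=c\}$. After lifting, differences along level-one edges lie in $\operatorname{span}(a^i,a^j)\cap r^\perp=\CC(a^j-a^i)$, and this defines $t_E$.

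In case (1) take $c=0$. A global $\psi$ then has lifts $\psi-\langle r,\psi\rangle a^j$, so $t_E=-\langle r,\psi\rangle$ on every edge. This is where $\CC\mathbf 1$ comes from.

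In case (2) it is not true that ``the relevant global form cannot be realised'': every $\psi\in\widetilde N_{\CC}$ is a coboundary. What actually happens is the following.
\begin{itemize}
\item At vertices with $\varphi_r\geq2$ the components are honest vectors of $\widetilde N_{\CC}$.
\item Along an edge joining two such vertices, they either coincide or differ by a multiple of $a^j-a^i$ with equal levels. Hence $c:=\langle r,\psi_v\rangle$ is constant on the vertices with $\varphi_r\geq2$, which form a connected subgraph of the edge graph of $P$.
\item A level-one neighbour of this set has its component forced to be the reduction of the neighbouring vector. So the level-one lifts must be taken at this $c$, not at $0$.
\item Cocycles then correspond to pairs (vector at a base vertex with $\varphi_r\geq2$, $t\in V'(r)$), and coboundaries correspond to pairs $(\psi,0)$.
\end{itemize}
The $r$-component of the coboundaries is absorbed by $c$, so no quotient by $\mathbf 1$ remains.

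Second, you claim that unequal levels force $\operatorname{span}K^r_{ij}$ to equal $\operatorname{span}K^r_i$ or $\operatorname{span}K^r_j$. This holds only because of an integrality fact you never state: $\varphi_r(v^j)-\varphi_r(v^i)=\langle\pi_M(r),v^j-v^i\rangle\in\ell(E)\ZZ$. Equivalently, the image of $\widetilde M$ under $m\mapsto(\langle a^i,m\rangle,\langle a^j,m\rangle)$ is $\{(\alpha,\beta)\mid\alpha\equiv\beta\bmod\ell(E)\}$, and $(\varphi_r(v^i),\varphi_r(v^j))$ lies in it.

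If levels $2\leq p<q\leq\ell(E)$ were possible, $K^r_{ij}$ would contain only diagonal pairs. Its span would then have codimension one, and an unequal-level edge would carry a parameter. With the congruence, $q\geq p+\ell(E)$, so the pairs $(1,1)$ and $(0,\ell(E))$ occur and the span is $\widetilde M$. When $p=1$ only $(0,\ell(E))$ occurs, which gives $a_i^\perp$; this is also what forces the level-one neighbours above.

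Finally, surjectivity onto $V(r)$ or $V'(r)$ does not follow from a cocycle condition. Only vertex and edge terms enter $H^1$, and the closure relations are the integrability conditions for rebuilding vertex forms from edge weights. Using them requires two things:
\begin{itemize}
\item the boundaries of compact two-faces must generate the cycle space of the compact edge graph (e.g.\ by contractibility of the bounded complex of $Q(r)$);
\item the rescaling by $q_E$ relating $a^j-a^i$ to $d_E$.
\end{itemize}
Also, contrary to your description of case (1), vertices with $\varphi_r\leq0$ have $K^r_j=\emptyset$ and carry no data; they do not give face semigroups.
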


\subsection{The obstruction space}

For every vertex \(v^j\) of \(P\), one has
\begin{equation}
\label{span-possibilities}
\operatorname{span}_{\CC}(K_j^r)
=
\begin{cases}
\widetilde M_{\CC},
&
\langle a^j,r\rangle\geq2,
\\[2mm]
(a^j)^\perp_{\CC},
&
\langle a^j,r\rangle=1,
\\[2mm]
0,
&
\langle a^j,r\rangle\leq0.
\end{cases}
\end{equation}

For a face \(F\preceq P\), write
\begin{equation}\label{eq hat}
\widehat F
:=
\operatorname{cone}
\left\{
(v,1)\mid v\in F
\right\}
\preceq\sigma
\end{equation}
and set
$
K_F^r:=K_{\widehat F}^r.
$
In particular, if \(F=\{v^j\}\), then
$
K_F^r=K_j^r.
$

By \eqref{eq tk eq}, the vector space \(T^2_{X_P}(-r)\) is the middle
cohomology of the complex
\begin{equation}
\label{eq t2 complex}
\bigoplus_{j=1}^p
\bigl(\operatorname{span}_{\CC}K_j^r\bigr)^*
\xrightarrow{\,g_1\,}
\bigoplus_{E\in\mathcal E(P)}
\bigl(\operatorname{span}_{\CC}K_E^r\bigr)^*
\xrightarrow{\,g_2\,}
\bigoplus_{\varepsilon\in\mathcal F_2(P)}
\bigl(\operatorname{span}_{\CC}K_\varepsilon^r\bigr)^*,
\end{equation}
where \(\mathcal E(P)\) and \(\mathcal F_2(P)\) denote the sets of
edges and two-dimensional faces of \(P\), respectively.

Choose an orientation of every edge of \(P\). For an oriented edge
$
E=[v^i,v^j],
$
write
$
d_E:=v^j-v^i.
$
The first map in \eqref{eq t2 complex} is given by
\[
g_1\bigl((\varphi_i)_i\bigr)_E
=
\left.
(\varphi_j-\varphi_i)
\right|_{\operatorname{span}_{\CC}K_E^r}.
\]

For every two-dimensional face \(\varepsilon\subset P\), choose an
orientation of its boundary and incidence signs
$
\delta_E^\varepsilon\in\{0,\pm1\}
$
such that
\[
\sum_{E\subset\varepsilon}
\delta_E^\varepsilon d_E=0.
\]
The second map in \eqref{eq t2 complex} is given by
\[
g_2\bigl((\varphi_E)_E\bigr)_\varepsilon
=
\sum_{E\subset\varepsilon}
\delta_E^\varepsilon
\left.
\varphi_E
\right|_{\operatorname{span}_{\CC}K_\varepsilon^r}.
\]

\subsection{Affine terminal Gorenstein toric singularities}
An \emph{affine lattice automorphism} of \(N_{\mathbb R}\) is a map \[ \varphi\colon N_{\mathbb R}\longrightarrow N_{\mathbb R}, \qquad x\longmapsto A(x)+b, \] where \[ A\in \operatorname{GL}(N) \qquad\text{and}\qquad b\in N. \] Such a map is also called an \emph{affine unimodular transformation}. \begin{definition} Let \(P,P'\subset N_{\mathbb R}\) be lattice polytopes. We say that \(P\) and \(P'\) are \emph{lattice equivalent}, or \emph{unimodularly equivalent}, if there exists an affine lattice automorphism \[ \varphi(x)=A(x)+b, \qquad A\in\operatorname{GL}(N),\quad b\in N, \] such that $\varphi(P)=P'.$ 
\end{definition}

\begin{definition}\label{def emp}
A lattice polytope \(P\subset N_{\RR}\) is called \emph{empty} if
$
P\cap N=\operatorname{Vert}(P),
$
that is, if its only lattice points are its vertices. 
An affine toric Gorenstein variety $X_P$ is \emph{terminal}, if $P$ is empty. 
\end{definition}

Note that terminal $X_P$ has only terminal singularities, see e.g.\ \cite[Proposition 11.4.12]{CLS11}. 
The following vanishing result will be useful.

\begin{theorem}
\label{th AvS comparison}
Let \(P\) be empty. For \(r\in\widetilde M\), write
\[
\varphi_r(v):=\langle r,(v,1)\rangle,
\qquad
F_r:=P\cap(\varphi_r=1).
\]
Then the following hold:
\begin{enumerate}
\item If there exists a vertex \(v\) of \(P\) such that
$
\varphi_r(v)>1,
$
then
\[
T^k_{X_P}(-r)=0,\qquad k=1,2.
\]

\item Suppose that \(\varphi_r\leq1\) on \(P\). If every
\(\ell\)-dimensional face of \(F_r\) is a simplex, then
\[
T^k_{X_P}(-r)=0
\qquad\text{for } k<\ell,\quad k=1,2.
\]

\item If
$
\dim F_r\leq2,
$
then
$
T^2_{X_P}(-r)=0.
$
\end{enumerate}
\end{theorem}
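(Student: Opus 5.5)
We prove the three statements separately, working directly with the complex \eqref{eq t2 complex} for \(T^2\) and with Theorem~\ref{th Altmann T1} for \(T^1\). The main tool throughout is \eqref{span-possibilities}. Since \(P\) is empty, every lattice point of \(P\) is a vertex. Also, \(\varphi_r\) is affine with integer values on lattice points, and the cone \(\sigma\) is generated by the \(a^j\).

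\emph{Part (1).} Suppose \(\varphi_r(v)\ge2\) for some vertex \(v\). For \(T^1\) we use Theorem~\ref{th Altmann T1}(2): \(T^1_{X_P}(-r)\cong V'(r)_{\CC}\), so a nonzero edge coordinate forces an edge \(E=[v^i,v^j]\) with \(1\le\varphi_r(v^i)=\varphi_r(v^j)\le\ell(E)\). Because \(P\) is empty, every edge is primitive, so \(\ell(E)=1\). It remains to check that, on the compact part of \(Q(r)\), the relations from two-faces force such coordinates to vanish. The plan is as follows. The compact edges of \(Q(r)\) with \(\varphi_r=1\) at both endpoints lie in the face \(F_r\), which is a proper face of \(P\) because \(\max\varphi_r\ge2\). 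Any compact two-face of \(Q(r)\) containing such an edge also contains edges coming from two-faces of \(\sigma\) that meet the region \(\varphi_r\ge2\), and those edges have coordinate \(0\). A planar closing relation \(\sum\delta t_E d_E=0\) in which all but possibly a collinear subset vanish then forces the rest to vanish. This argument may need care in degenerate cases.

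For \(T^2\), we compute the complex \eqref{eq t2 complex} vertex by vertex. Vertices with \(\varphi_r\ge2\) contribute \(\widetilde M_{\CC}\), vertices with \(\varphi_r=1\) contribute \((a^j)^\perp\), and vertices with \(\varphi_r\le0\) contribute \(0\). We then show the complex is exact in the middle. Since \(P\) is empty, the lattice points of a face \(F\) in a given height are only vertices. This gives \(\operatorname{span}K_F^r=\bigcap_{a^j\in F}\operatorname{span}K^r_j\) whenever the face has a vertex of height \(\ge2\) or all its vertices have height \(1\), which lets us reduce to the sheaf-like (Čech) cohomology of a covering of the star of the region \(\{\varphi_r\ge 2\}\) in \(P\). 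This region is contractible and convex, and that yields the vanishing.

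\emph{Parts (2) and (3).} Now \(\varphi_r\le1\) on \(P\). The vertices with \(\varphi_r=1\) are exactly those of \(F_r\), and the complex localises to \(F_r\). Each vertex contributes \((a^j)^\perp\), and for a face \(G\subseteq F_r\), emptiness gives \(\operatorname{span}K_G^r=\widehat G^{\perp}\) (the relevant lattice points are exactly the vertices). The complex therefore becomes the cellular complex of \(F_r\) with coefficients \(\widehat G^\perp\), that is, duals of quotients \(\widetilde M/\widehat G^{\perp}\)-type data. This is the setting of Altmann--van Straten \cite{AvS00}: the cohomology in degree \(k\) is controlled by the \(k\)-dimensional faces. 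A simplex face imposes independent conditions, so its contribution vanishes, which gives \(T^k=0\) for \(k<\ell\) when all \(\ell\)-faces are simplices. Part (3) follows similarly. If \(\dim F_r\le2\), there are no three-faces in \(F_r\), so the \(T^2\) term reduces to a comparison of the edge and two-face conditions. Every two-face of an empty polytope is an empty polygon, hence a unimodular triangle or a unit parallelogram, and a direct check shows the middle cohomology vanishes.

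The main obstacle is Part (1) for \(T^2\). There, vertices of heights \(\ge2\), \(1\) and \(\le0\) coexist, and exactness of \eqref{eq t2 complex} needs a careful comparison of the spans \(K_E^r\) and \(K_\varepsilon^r\) on mixed faces. I would first try to reduce to the known vanishing criteria of \cite{AvS00} by showing the complex is quasi-isomorphic to the one attached to the subcomplex of faces meeting \(\{\varphi_r\ge2\}\), and then apply convexity.
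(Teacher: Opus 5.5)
The paper does not prove this statement itself; it quotes \cite[Proposition~4.1, Theorem~6.6]{AvS00}. Your proposal tries to be self-contained, but it has genuine gaps, and the hardest step is missing.

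\textbf{Part (1), $k=2$.} Nothing is actually proved here. The ``\v{C}ech cohomology of a covering of the star of $\{\varphi_r\ge2\}$'' is never defined. The coefficients are not constant: vertices of level $\ge2$, $=1$ and $\le0$ contribute $\widetilde N_{\CC}$, $\widetilde N_{\CC}/\CC a^j$ and $0$. Convexity of a region does not by itself kill cohomology with such coefficients.

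The identity $\operatorname{span}K^r_F=\bigcap_{a^j\in F}\operatorname{span}K^r_j$ is only asserted, and in the generality you state it is false. Take the empty tetrahedron $P=\operatorname{conv}\{0,e_1,e_3,(2,5,1)\}$ and $r=2R^*$. The elements of $K^r_P$ can only have vertex-value vectors $(0,0,0,0)$, $(1,1,0,0)$, $(0,0,1,1)$, $(1,1,1,1)$, so they span a plane, not $\widetilde M_{\CC}$. For faces of dimension $\le2$, which are the only ones entering the $T^2$ complex, the identity does hold. But proving it requires lattice points of $\sigma^\vee$ with prescribed values $0,1$ on the vertices of unimodular triangles and unit squares, and you do not construct them. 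Your closing plan is to reduce to \cite{AvS00}, which is exactly the paper's proof.

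\textbf{Part (1), $k=1$.} This case rests on two false claims. First, $F_r$ need not be a face when $\max\varphi_r\ge2$: for $P=[0,1]^3$ and $\varphi_r=1+x-y$, $F_r$ is the interior section $x=y$. Second, a compact two-face of $Q(r)$ through a level-one edge need not meet $\{\varphi_r\ge2\}$: for $\varphi_r=1+x$, the facet $x=0$ lies entirely at level one.

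What you need, and what is true, is that \emph{some} two-face through each level-one edge $E$ has all its other vertices at level $\ge2$. Since $E$ is an edge of every two-face through it, the remaining vertices of such a face are all at level $\le0$, all at level $1$, or all at level $\ge2$. If the last case never occurred, the linear part of $\varphi_r$ would be $\le0$ on the tangent cone of $P$ along $E$. That cone is generated by $\pm d_E$ and the two-faces through $E$, so this would force $\varphi_r\le1$ on $P$, a contradiction. In a face of the last kind, every other edge coordinate vanishes in $V'(r)$, and the closure relation then kills $t_E$.

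\textbf{Parts (2) and (3).} These are likewise only gestured at: ``a simplex imposes independent conditions'' restates \cite[Theorem~6.6]{AvS00} rather than proving it. The missing mechanism is the following.
\begin{itemize}
\item For $\varphi_r\le1$ the complex is $C^k=\bigoplus_{G\preceq F_r,\ \dim G=k-1}\widetilde N_{\CC}/\operatorname{span}\widehat G$, with $C^0=\widetilde N_{\CC}$.
\item It is the quotient of the acyclic complex $\widetilde N_{\CC}\otimes(\text{augmented cellular cochains of }F_r)$ by $L^k=\bigoplus_G\operatorname{span}\widehat G$. Hence $T^k_{X_P}(-r)\cong H^{k+1}(L)$.
\item On a simplex $G$ one has $\operatorname{span}\widehat G=\bigoplus_{v\in G}\CC a^v$. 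So when all faces of $F_r$ of dimension $\le\ell$ are simplices, $L$ agrees through degree $\ell+1$ with a direct sum over vertices of augmented cochain complexes of vertex figures, and these are acyclic.
\end{itemize}

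This also shows that the hypothesis actually needed is that \emph{all faces of dimension $\le\ell$} are simplices. Read literally, (2) has a vacuous hypothesis when $\dim F_r<\ell$, and its conclusion then fails. For a unit square facet $F_r$ and $\ell=3$ it would give $T^1_{X_P}(-r)=0$, contradicting Proposition~\ref{prop empty T1 degrees}. Your heuristic does not detect this.

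Your direct check for (3) on triangles and unit squares is fine once $\varphi_r\le1$. However, (3) as stated also covers $r$ with $\varphi_r>1$ somewhere, and there it depends on the missing $T^2$ part of (1).
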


\begin{proof}
This follows from
\cite[Proposition~4.1, Theorem~6.6]{AvS00}.
\end{proof}

\section{Two-parameter deformations associated with compatible
deformation pairs}
\label{sec compatible two parameter}

{We retain the notation of Subsection~\ref{sub the setup} and follow the
sign and grading conventions of \cite[Sections~2 and~3]{Fil26}. For
convenience, we recall below the conventions used in the construction.}
We construct a simultaneous
two-parameter deformation associated with two deformation pairs having
possibly different Minkowski summands, generalizing the one-parameter
construction of \cite[Section~3]{Fil26}. This result will be crucial in proving the unobstructedness of affine terminal Gorenstein toric varieties; see Theorem~\ref{th term un}.

{We use the convention
\(\NN=\{0,1,2,\ldots\}\). For multi-indices
\(\boldsymbol\alpha,\boldsymbol\beta\in\NN^2\), the notation
\(\boldsymbol\alpha\leq\boldsymbol\beta\) means componentwise
inequality. For a lattice polytope \(Q\subset N_{\RR}\), our support
function is
\[
\eta_Q(c):=-\min_{v\in Q}\langle c,v\rangle,
\qquad c\in M_{\RR}.
\]
Thus \(\eta_Q\) is positively homogeneous and subadditive. We say that
a polytope \(A\) is a Minkowski summand of a polytope \(B\) if
\(B=A+C\) for some polytope \(C\). All semigroup algebras and
polynomial presentations below are \(\widetilde M\)-graded, with
\(\deg(\chi^s)=s\).}

We begin with a finite generating set
$
{S_P=\operatorname{cone} (s_1,\ldots,s_r,R^*)},
$
where each
$
s_j=(c_j,\eta_P(c_j))
$
is a boundary element. The set is not required to be minimal and will
be enlarged below, after the two deformation pairs have been fixed. 
{Put \(\bfx=(x_1,\ldots,x_r)\), and let
\[
\Theta_P\colon
\CC[\bfx,u]\longrightarrow\CC[S_P]
\]
be the surjective homomorphism determined by
\[
\Theta_P(x_j)=\chi^{s_j},
\qquad
\Theta_P(u)=\chi^{R^*}.
\]
Setting
$
I_P:=\ker(\Theta_P),
$
we have
\[
X_P
=
\operatorname{Spec}\CC[S_P]
\cong
\operatorname{Spec}\frac{\CC[\bfx,u]}{I_P}.
\]}

For \(\bfk=(k_1,\ldots,k_r)\in\NN^r\), set
\[
s_{\bfk}:=\sum_{j=1}^r k_js_j,
\qquad
\bfx^{\bfk}:=\prod_{j=1}^r x_j^{k_j},
\]
and, for a lattice polytope \(Q\subset N_{\RR}\), define
\[
\eta_Q(\bfk)
:=
\sum_{j=1}^r\eta_Q(k_jc_j)
-
\eta_Q\left(\sum_{j=1}^r k_jc_j\right).
\]

Every \(s\in S_P\) has a unique decomposition
\[
s=\partial_P(s)+n_P(s)R^*,
\qquad
\partial_P(s)-R^*\notin S_P,
\qquad
n_P(s)\in\NN.
\]

For every element \(b\in\partial_P(S_P)\), choose an exponent vector
\[
\partial(b)\in\mathbb N^r
\qquad\text{such that}\qquad
s_{\partial(b)}=b.
\]
After fixing the two deformation pairs, we shall replace these choices
by representatives adapted simultaneously to their two support
functions.
For \(s\in S_P\), define
$
\partial(s):=\partial\bigl(\partial_P(s)\bigr) 
$ 
and put
\[
\chi_P^s:=\bfx^{\partial(s)}u^{n_P(s)}.
\]
{
For \(s=s_{\bfk}\), write
\[
\partial(\bfk):=\partial(s_{\bfk}).
\]
By positive homogeneity of \(\eta_P\), the preceding definition gives
\[
\eta_P(\bfk)
=
\sum_{j=1}^r k_j\eta_P(c_j)
-
\eta_P\left(\sum_{j=1}^r k_jc_j\right)
=
n_P(s_{\bfk}).
\]
Consequently,
\[
s_{\bfk}
=
s_{\partial(\bfk)}+\eta_P(\bfk)R^*,
\qquad
\chi_P^{s_{\bfk}}
=
\bfx^{\partial(\bfk)}u^{\eta_P(\bfk)}.
\]
}

The following elementary presentation lemma will allow us to enlarge the
semigroup generating set without changing the form of the equations or
of their relations.

\begin{lemma}\label{lem presentation arbitrary generators}
For any finite set of boundary generators as above and any choices of
boundary representatives \(\partial(b)\), the binomials
\[
f_{\bfk}
:=
\bfx^{\bfk}-\chi_P^{s_{\bfk}},
\qquad
\bfk\in\NN^r,
\]
generate \(I_P\). 
{Let
$
\mathcal P:=\CC[\bfx,u]
$
and let
\[
\mathcal F
:=
\bigoplus_{\bfk\in\NN^r}\mathcal P e_{\bfk}
\]
be the free \(\mathcal P\)-module with basis
\(\{e_{\bfk}\mid\bfk\in\NN^r\}\). Let
\[
\psi\colon\mathcal F\longrightarrow I_P
\]
be the unique \(\mathcal P\)-linear homomorphism determined by
\[
\psi(e_{\bfk})=f_{\bfk}.
\]
Then \(\psi\) is surjective, and its kernel is generated as a
\(\mathcal P\)-module by the elements
\[
\rho_{\bfa,\bfk}
:=
e_{\bfa+\bfk}
-
\bfx^{\bfa}e_{\bfk}
-
u^{\eta_P(\bfk)}
e_{\partial(\bfk)+\bfa},
\qquad
\bfa,\bfk\in\NN^r.
\]
Equivalently, the first syzygy module of the family
\(\{f_{\bfk}\}_{\bfk\in\NN^r}\) is generated by the
\(\rho_{\bfa,\bfk}\).
}
\end{lemma}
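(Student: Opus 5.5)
We treat the two claims in turn: first generation of \(I_P\) by the binomials \(f_{\bfk}\), then the syzygies. Both rest on the grading of \(\mathcal P=\CC[\bfx,u]\) by \(\deg(x_j)=s_j\) and \(\deg(u)=R^*\). Since \(\Theta_P\) is homogeneous and \(\CC[S_P]\) has one-dimensional graded pieces, \(I_P\) is spanned by the binomials \(\bfx^{\bfk}u^a-\bfx^{\bfk'}u^{a'}\) with \(s_{\bfk}+aR^*=s_{\bfk'}+a'R^*\). This is the standard fact that toric ideals are spanned by binomials.

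\textbf{Step 1 (normal form and generation).} For a monomial \(\bfx^{\bfk}u^a\) of degree \(s=s_{\bfk}+aR^*\), note that \(\partial_P(s)=\partial_P(s_{\bfk})\) and \(n_P(s)=\eta_P(\bfk)+a\). Hence
\[
\bfx^{\bfk}u^a-\chi_P^{s}=u^a f_{\bfk}.
\]
Every monomial is therefore congruent modulo \((f_{\bfk})\) to the normal form \(\chi_P^s\), which depends only on \(s\). A binomial \(\bfx^{\bfk}u^a-\bfx^{\bfk'}u^{a'}\) in \(I_P\) is then \(u^af_{\bfk}-u^{a'}f_{\bfk'}\). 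This gives generation and the surjectivity of \(\psi\).

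\textbf{Step 2 (the \(\rho_{\bfa,\bfk}\) are syzygies).} We compute
\[
f_{\bfa+\bfk}-\bfx^{\bfa}f_{\bfk}-u^{\eta_P(\bfk)}f_{\partial(\bfk)+\bfa}
=-\chi_P^{s_{\bfa+\bfk}}+\bfx^{\bfa}\chi_P^{s_{\bfk}}-u^{\eta_P(\bfk)}\bfx^{\partial(\bfk)+\bfa}+u^{\eta_P(\bfk)}\chi_P^{s_{\partial(\bfk)+\bfa}}.
\]
The middle two terms cancel, because \(\bfx^{\bfa}\chi_P^{s_{\bfk}}=\bfx^{\bfa+\partial(\bfk)}u^{\eta_P(\bfk)}\). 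For the outer two terms, the degree of \(\partial(\bfk)+\bfa\) is \(s_{\bfa+\bfk}-\eta_P(\bfk)R^*\), so its normal form times \(u^{\eta_P(\bfk)}\) equals \(\chi_P^{s_{\bfa+\bfk}}\) by the uniqueness observed in Step 1. Hence \(\psi(\rho_{\bfa,\bfk})=0\).

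\textbf{Step 3 (the \(\rho_{\bfa,\bfk}\) generate \(\ker\psi\)).} This is the main step. Let \(K\subset\ker\psi\) be the submodule generated by the \(\rho_{\bfa,\bfk}\). Grade \(\mathcal F\) by \(\deg(e_{\bfk})=s_{\bfk}\); then \(\psi\) and the \(\rho_{\bfa,\bfk}\) are homogeneous, so it suffices to treat a homogeneous syzygy \(\sum_i c_i\,\bfx^{\bfb_i}u^{a_i}e_{\bfk_i}\) of degree \(s\).

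\emph{Reduction of terms.} Modulo \(K\), each term \(\bfx^{\bfb}u^ae_{\bfk}\) reduces to a combination of terms of the form \(u^{c}e_{\bfm}\) with \(c\ge 0\). Indeed, \(\rho_{\bfb,\bfk}\) gives
\[
\bfx^{\bfb}e_{\bfk}\equiv e_{\bfb+\bfk}-u^{\eta_P(\bfk)}e_{\partial(\bfk)+\bfb},
\]
so after multiplying by \(u^a\) every term becomes \(u^{c}e_{\bfm}\).

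\emph{Comparing two terms of the same degree.} Next, \(\rho_{\bfm,\underline 0}\) shows that \(e_{\partial(\bfm)}\equiv 0\) whenever \(\partial(\underline0)=\underline0\), which we fix. Using \(\rho_{\underline0,\bfm}\), we get
\[
e_{\bfm}\equiv u^{\eta_P(\bfm)}e_{\partial(\bfm)}\pmod K.
\]
Two terms \(u^ce_{\bfm}\) and \(u^{c'}e_{\bfm'}\) of the same degree \(s\) have \(\partial(\bfm)=\partial(\bfm')\), namely the chosen representative of \(\partial_P(s)\). Also \(c+\eta_P(\bfm)=c'+\eta_P(\bfm')=n_P(s)\). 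So both terms are congruent modulo \(K\) to \(u^{n_P(s)}e_{\partial(s)}\).

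\emph{Conclusion.} A homogeneous syzygy of degree \(s\) is therefore congruent modulo \(K\) to \(\lambda\,u^{n_P(s)}e_{\partial(s)}\) for some \(\lambda\in\CC\). Since \(K\subset\ker\psi\), this element still lies in \(\ker\psi\), so
\[
0=\psi\bigl(\lambda\,u^{n_P(s)}e_{\partial(s)}\bigr)=\lambda\,u^{n_P(s)}f_{\partial(s)}.
\]
If \(f_{\partial(s)}\neq0\), then \(\lambda=0\). If \(f_{\partial(s)}=0\), then \(\rho_{\underline0,\partial(s)}\) shows directly that \(e_{\partial(s)}\in K\). In either case the syzygy lies in \(K\).

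\emph{Main obstacle.} The delicate point is the bookkeeping of the \(u\)-exponents in the reduction. One must check that \(n_P\) is additive in the way used above, namely \(n_P(s)=\eta_P(\bfm)+c\) for a term \(u^ce_{\bfm}\) of degree \(s\). This follows from the identity \(\eta_P(\bfk)=n_P(s_{\bfk})\) stated just before the lemma.
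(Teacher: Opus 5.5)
Your Steps 1 and 2 are correct, and so is the first half of Step 3, where each term $\bfx^{\bfb}u^ae_{\bfk}$ is rewritten modulo $K$ via $\rho_{\bfb,\bfk}$ as a combination of terms $u^ce_{\mathbf m}$. The argument breaks at ``Comparing two terms of the same degree''.

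For $\bfa=\underline 0$ the first two terms of $\rho_{\bfa,\bfk}$ cancel, so $\rho_{\underline 0,\mathbf m}=-u^{\eta_P(\mathbf m)}e_{\partial(\mathbf m)}$. Likewise $\rho_{\mathbf m,\underline 0}=-\bfx^{\mathbf m}e_{\underline 0}$; here $\partial(\underline0)=\underline0$ is forced, since $\sigma^\vee$ is strongly convex and all $s_j\neq0$. Neither relation yields $e_{\mathbf m}\equiv u^{\eta_P(\mathbf m)}e_{\partial(\mathbf m)}\pmod K$, and this congruence is in fact false.

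To see this, note that $s_{\partial(\mathbf m)}=\partial_P(s_{\mathbf m})$ is a boundary element. Hence $\eta_P(\partial(\mathbf m))=0$ and $\partial(\partial(\mathbf m))=\partial(\mathbf m)$, so $f_{\partial(\mathbf m)}=0$. Therefore $\psi\bigl(e_{\mathbf m}-u^{\eta_P(\mathbf m)}e_{\partial(\mathbf m)}\bigr)=f_{\mathbf m}$, which is nonzero in general, whereas $\psi(K)=0$ by your Step 2.

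More generally, two distinct terms $u^ce_{\mathbf m}$ and $u^{c'}e_{\mathbf m'}$ of the same degree are never congruent modulo $K$. By Step 1, their difference maps to $u^c\bfx^{\mathbf m}-u^{c'}\bfx^{\mathbf m'}\neq0$. The same observation $f_{\partial(s)}=0$ shows that your final case distinction always falls into the second case. So the argument never uses that the element is a syzygy; as written, it would prove $K=\mathcal F$, which is absurd.

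The missing idea is a monomial comparison rather than a collapse to a single term. After your reduction, a homogeneous element of degree $s$ has the form $g=\sum\lambda_{\mathbf m,c}u^ce_{\mathbf m}$, the sum running over distinct pairs with $s_{\mathbf m}+cR^*=s$. Step 1 gives $\psi(u^ce_{\mathbf m})=u^c\bfx^{\mathbf m}-\chi_P^s$. The monomials $u^c\bfx^{\mathbf m}$ are pairwise distinct, and only the pair $(\partial(s),n_P(s))$ produces $\chi_P^s$. Hence $\psi(g)=0$ forces $\lambda_{\mathbf m,c}=0$ for every other pair. This leaves $g=\lambda\,u^{n_P(s)}e_{\partial(s)}=-\lambda\,u^{n_P(s)}\rho_{\underline 0,\partial(s)}\in K$.

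With this replacement your direct argument becomes a complete, self-contained proof. The paper, by contrast, only refers to the Hilbert-basis version of the lemma in earlier work and asserts that the same argument applies.
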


\begin{proof}
For the corresponding statement for the Hilbert-basis presentation, see
\cite[Lemmas~5.3 and~5.6]{ACF22a}. The same argument applies to any
finite set of boundary generators and any fixed choice of boundary
representatives.
\end{proof}

Consequently, the module of relations among the equations \(f_{\bfk}\)
is generated by
\begin{equation}
\label{eq standard relation two summands}
r_{\bfa,\bfk}
:=
f_{\bfa+\bfk}
-
\bfx^{\bfa}f_{\bfk}
-
u^{\eta_P(\bfk)}
f_{\partial(\bfk)+\bfa}
=
0,
\qquad
\bfa,\bfk\in\NN^r.
\end{equation}

{
\begin{example}\label{ex eight vertex polytope coordinate ring}
Continue with the polytope of
Example~\ref{ex eight vertex polytope hilbert basis}. Recall that
\(H_P=\{s_1,\ldots,s_8\}\), whereas
\(R^*=(0,0,0,1)=s_2+s_7=s_3+s_8\). Although \(R^*\) is decomposable
and hence does not belong to the Hilbert basis, we adjoin it as a
redundant semigroup generator and denote the corresponding variable
by \(u\).

Let \(\mathbf e_1,\ldots,\mathbf e_8\) denote the standard basis of
\(\NN^8\), and write
\(f_{ij}:=f_{\mathbf e_i+\mathbf e_j}\). The following additive
relations hold in \(S_P\):
\[
\begin{alignedat}{3}
s_1+s_5&=s_2+R^*,
&\hspace{3em}
s_1+s_6&=s_3+R^*,
&\hspace{3em}
s_1+s_7&=s_3+s_4,
\\
s_1+s_8&=s_2+s_4,
&\hspace{3em}
s_2+s_6&=s_3+s_5,
&\hspace{3em}
s_2+s_7&=R^*,
\\
s_3+s_8&=R^*,
&\hspace{3em}
s_4+s_5&=s_8+R^*,
&\hspace{3em}
s_4+s_6&=s_7+R^*,
\\
s_5+s_7&=s_6+s_8.
\end{alignedat}
\]
Choose the boundary representatives in accordance with the right-hand
sides of these relations. The corresponding binomials are
\[
\begin{alignedat}{3}
f_{15}&=x_1x_5-x_2u,
&\hspace{3em}
f_{16}&=x_1x_6-x_3u,
&\hspace{3em}
f_{17}&=x_1x_7-x_3x_4,
\\
f_{18}&=x_1x_8-x_2x_4,
&\hspace{3em}
f_{26}&=x_2x_6-x_3x_5,
&\hspace{3em}
f_{27}&=x_2x_7-u,
\\
f_{38}&=x_3x_8-u,
&\hspace{3em}
f_{45}&=x_4x_5-x_8u,
&\hspace{3em}
f_{46}&=x_4x_6-x_7u,
\\
f_{57}&=x_5x_7-x_6x_8.
\end{alignedat}
\]
For instance, \(s_1+s_5=s_2+R^*\) gives
\(\partial(\mathbf e_1+\mathbf e_5)=\mathbf e_2\) and
\(\eta_P(\mathbf e_1+\mathbf e_5)=1\), hence
\(f_{15}=x_1x_5-x_2u\). Similarly, \(s_1+s_7=s_3+s_4\) gives
\(\partial(\mathbf e_1+\mathbf e_7)=\mathbf e_3+\mathbf e_4\) and
\(\eta_P(\mathbf e_1+\mathbf e_7)=0\), hence
\(f_{17}=x_1x_7-x_3x_4\).

Let
\[
J:=
(f_{15},f_{16},f_{17},f_{18},f_{26},
 f_{27},f_{38},f_{45},f_{46},f_{57})
\subseteq\CC[x_1,\ldots,x_8,u].
\]
Since every displayed binomial comes from an additive relation in
\(S_P\), we have \(J\subseteq I_P\). We verify the reverse inclusion
using a monomial parametrization.

Consider the unimodular lattice automorphism
\(U\colon\widetilde M\to\widetilde M\) defined by
\(U(a,b,c,d)=(d,a+d,b+d,c+d)\). Its inverse is
\(U^{-1}(\alpha,\beta,\gamma,\delta)
=(\beta-\alpha,\gamma-\alpha,\delta-\alpha,\alpha)\), so
\(U\in\operatorname{GL}_4(\ZZ)\). Moreover, the vectors
\(U(s_1),\ldots,U(s_8),U(R^*)\) have nonnegative coordinates.

Let \(w,p,q,z\) be auxiliary variables corresponding, in this order,
to the four coordinates of \(U(\widetilde M)\). Thus
\((\alpha,\beta,\gamma,\delta)\) is represented by the monomial
\(w^\alpha p^\beta q^\gamma z^\delta\). The transformed generators
determine a monomial homomorphism
\(\Phi\colon\CC[x_1,\ldots,x_8,u]\to\CC[w,p,q,z]\) given by
\[
\begin{array}{c|ccccccccc}
 &x_1&x_2&x_3&x_4&x_5&x_6&x_7&x_8&u\\ \hline
\Phi(\,\cdot\,)
&p&q&z&wp^2&wq^2z&wqz^2&wpz&wpq&wpqz.
\end{array}
\]
For example, \(U(s_1)=(0,1,0,0)\) and
\(U(s_4)=(1,2,0,0)\), which explains the assignments
\(\Phi(x_1)=p\) and \(\Phi(x_4)=wp^2\).

Since \(U\) is unimodular, it preserves all additive relations among
the semigroup generators. Hence
\(\operatorname{im}(\Phi)\cong\CC[S_P]\) and
\(\ker(\Phi)=I_P\). A Gr\"obner-basis computation in Magma gives
\[
\begin{aligned}
\mathcal G=\bigl\{&
x_2x_6x_8-x_5u,\;
x_1x_7x_8-x_4u,\;
x_2x_4-x_1x_8,\;
x_3x_4-x_1x_7,\\
&
x_1x_5-x_2u,\;
x_3x_5-x_2x_6,\;
x_4x_5-x_8u,\;
x_1x_6-x_3u,\\
&
x_4x_6-x_7u,\;
x_2x_7-u,\;
x_5x_7-x_6x_8,\;
x_3x_8-u
\bigr\}
\end{aligned}
\]
for \(I_P\). Ten elements of \(\mathcal G\) are the generators of
\(J\), up to multiplication by \(-1\). The remaining two cubic
binomials are redundant: indeed,
\(x_2x_6x_8-x_5u=x_8f_{26}+x_5f_{38}\) and
\(x_1x_7x_8-x_4u=x_8f_{17}+x_4f_{38}\). Thus
\(\mathcal G\subseteq J\), and consequently \(I_P\subseteq J\).
Together with \(J\subseteq I_P\), this proves \(I_P=J\). Therefore
\[
X_P\cong
\operatorname{Spec}
\frac{\CC[x_1,\ldots,x_8,u]}
{(f_{15},f_{16},f_{17},f_{18},f_{26},
  f_{27},f_{38},f_{45},f_{46},f_{57})}.
\]

We finally illustrate the syzygy statement of
Lemma~\ref{lem presentation arbitrary generators}. Take
\(\bfk=\mathbf e_1+\mathbf e_5\) and \(\bfa=\mathbf e_7\). As observed
above, \(\partial(\bfk)=\mathbf e_2\) and \(\eta_P(\bfk)=1\). The
corresponding standard syzygy is
\[
\rho_{\mathbf e_7,\mathbf e_1+\mathbf e_5}
=
e_{\mathbf e_1+\mathbf e_5+\mathbf e_7}
-x_7e_{\mathbf e_1+\mathbf e_5}
-u e_{\mathbf e_2+\mathbf e_7}
\in\mathcal F.
\]
Applying the \(\mathcal P\)-linear homomorphism
\(\psi\colon\mathcal F\to I_P\) gives
\[
\psi\left(
\rho_{\mathbf e_7,\mathbf e_1+\mathbf e_5}
\right)
=
f_{157}-x_7f_{15}-uf_{27},
\]
where
\(f_{157}:=f_{\mathbf e_1+\mathbf e_5+\mathbf e_7}
=x_1x_5x_7-u^2\), since \(s_1+s_5+s_7=2R^*\). Indeed,
\[
\begin{aligned}
f_{157}-x_7f_{15}-uf_{27}
&=(x_1x_5x_7-u^2)
-x_7(x_1x_5-x_2u)
-u(x_2x_7-u)
=0.
\end{aligned}
\]
Thus
\(\rho_{\mathbf e_7,\mathbf e_1+\mathbf e_5}\in\ker(\psi)\), as
asserted by Lemma~\ref{lem presentation arbitrary generators}.
\end{example}
}

{We now recall the notation used in the definition of a deformation
pair. For \(m\in\widetilde M\), let
\[
\varphi_m\colon N_{\RR}\longrightarrow\RR,
\qquad
\varphi_m(v)
:=
\langle m,(v,1)\rangle
=
\langle\pi_M(m),v\rangle+\pi_{\ZZ}(m).
\]}

{
\begin{definition}[\cite{Fil26}]
\label{def deformation pair}
Let \(m\in\widetilde M\), and let
\(Q\subset\ker\bigl(\pi_M(m)\bigr)\) be a lattice polytope. The pair
\((m,Q)\) is called a \emph{deformation pair of \(P\)} if, for every
\(i\in\NN\) such that the section
\[
P_i(m):=P\cap\{\varphi_m=i\}
\]
is nonempty, there exists a polytope \(R_i\) such that
\[
P_i(m)=iQ+R_i.
\]
Here \(iQ=Q+\ldots+Q\) denotes the \(i\)-fold Minkowski sum of \(Q\) for \(i\geq1\),
and \(0Q:=\{0\}\). Equivalently, the condition says that \(iQ\) is a
Minkowski summand of every nonempty integral-level section \(P_i(m)\).
\end{definition}
}

{A deformation pair \((m,Q)\) determines a one-parameter deformation of
\(X_P\) \cite[Corollary 3.9]{Fil26}.} We denote its deformation parameter
by \(t_{(m,Q)}\) and assign it degree
$
\deg t_{(m,Q)}=m.
$
{The deformation-pair condition implies the following regularity property:
\begin{equation}
\label{eq one parameter regularity}
s_{\mathbf k}-jm\in\sigma^\vee
\qquad
\text{whenever }0\le j\le\eta_Q(\mathbf k).
\end{equation}
Indeed, this follows by evaluating on the generators $(v,1)$ of
$\sigma$ and using the Minkowski decomposition
$P_i(m)=iQ+R_i$.}

Let
\[
\mathcal D
=
\bigl\{(m_1,Q_1),(m_2,Q_2)\bigr\}
\]
be two deformation pairs of \(P\). We assume that they are
\emph{mutually compatible}, meaning that
\begin{equation}
\label{eq mutually orthogonal summands}
Q_1,Q_2
\subset
\ker\pi_M(m_1)\cap\ker\pi_M(m_2).
\end{equation}

We also assume that
\begin{equation}
\label{eq disjoint positive loci}
\varphi_{m_1}(v)\varphi_{m_2}(v)=0
\qquad
\text{for every vertex }v\text{ of }P.
\end{equation}

We now make the boundary choices above simultaneously compatible with
\(Q_1\) and \(Q_2\). The following lemma is the reason for allowing a
nonminimal semigroup generating set.

\begin{lemma}\label{lem adapted boundary representatives}
After adjoining finitely many boundary elements to
\(s_1,\ldots,s_r\), every boundary element
\(b\in\partial_P(S_P)\) admits a representative \(\partial(b)\) such
that
\begin{equation}\label{eq adapted boundary vanishing}
\eta_{Q_i}(\partial(b))=0,
\qquad i=1,2.
\end{equation}
The enlarged elements together with \(R^*\) still generate \(S_P\),
and the presentation and syzygy statement of
Lemma~\ref{lem presentation arbitrary generators} applies to this
enlarged presentation.
\end{lemma}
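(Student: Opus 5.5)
The idea is to refine everything to a single fan on which all three support functions \(\eta_P,\eta_{Q_1},\eta_{Q_2}\) are linear. Then we take Hilbert bases cone by cone, so that every boundary element decomposes into generators lying in one common cone of linearity. On such a cone the defect \(\eta_{Q_i}(\bfk)\) vanishes automatically.

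\emph{Step 1: boundary elements are lattice points on the graph of \(\eta_P\).} Let \(b=(c,h)\in\partial_P(S_P)\) with \(c\in M\). Since \(b\in\sigma^\vee\), evaluating on the generators \((v,1)\) gives \(h\geq\eta_P(c)\). Since \(b-R^*\notin S_P\), we also get \(h-1<\eta_P(c)\). Now \(\eta_P(c)=-\langle c,v_P(c)\rangle\) is an integer, because \(v_P(c)\) is a lattice point. Hence \(h=\eta_P(c)\). Conversely, every \((c,\eta_P(c))\) with \(c\in M\) is a boundary element. So
\[
\partial_P(S_P)=\{(c,\eta_P(c))\mid c\in M\}.
\]

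\emph{Step 2: a common fan of linearity.} Let \(\Sigma\) be the common refinement in \(M_{\RR}\) of the normal fans of \(P\), \(Q_1\) and \(Q_2\). For \(Q_i\), which may be lower dimensional, the normal fan is not pointed; this does not matter, since its cones are still rational polyhedral and cover \(M_{\RR}\). On each maximal cone \(C\in\Sigma\), each of \(\eta_P,\eta_{Q_1},\eta_{Q_2}\) is the restriction of a linear function. Indeed, a single vertex minimizes \(\langle c,\cdot\rangle\) over the polytope for all \(c\in C\).

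\emph{Step 3: adjoin the Hilbert bases.} For each maximal cone \(C\), the map \(c\mapsto(c,\eta_P(c))\) is a linear lattice map on \(C\). Therefore \(\{(c,\eta_P(c))\mid c\in C\cap M\}\) is an affine semigroup, isomorphic to \(C\cap M\). By Gordan's lemma it has a finite Hilbert basis \(H_C\). We adjoin \(\bigcup_{C}H_C\) (finitely many elements) to \(s_1,\dots,s_r\). The enlarged set contains the original generators, so together with \(R^*\) it still generates \(S_P\).

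Now let \(b=(c,\eta_P(c))\), and choose a maximal cone \(C\) containing \(c\). Write \(b=\sum_j k_js_j\) using only generators \(s_j=(c_j,\eta_P(c_j))\in H_C\), and set \(\partial(b):=\bfk\). Then \(s_{\partial(b)}=b\) as required. Since all \(c_j\) lie in \(C\), and \(\eta_{Q_i}\) is positively homogeneous and linear on \(C\),
\[
\eta_{Q_i}(\bfk)=\sum_j k_j\eta_{Q_i}(c_j)-\eta_{Q_i}\Bigl(\sum_jk_jc_j\Bigr)=0,\qquad i=1,2.
\]
This is \eqref{eq adapted boundary vanishing}.

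\emph{Step 4: the presentation still applies.} Lemma~\ref{lem presentation arbitrary generators} was stated for any finite set of boundary generators and any choice of boundary representatives. It therefore applies verbatim to the enlarged set with these adapted representatives.

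The only point requiring some care is Step 1: the integrality of \(\eta_P\) on \(M\) and the identification of boundary elements with graph points. It is exactly this identification that allows a representative of \(b\) to be taken entirely inside a single cone of \(\Sigma\). Everything else is Gordan's lemma plus linearity of support functions on normal cones.
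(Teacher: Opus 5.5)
Your proposal is correct and follows essentially the same route as the paper: take a common refinement \(\Sigma\) of the normal fans of \(P,Q_1,Q_2\), adjoin the Hilbert bases of the graph semigroups \(\{(c,\eta_P(c))\mid c\in\tau\cap M\}\), and represent each boundary element inside a single cone, so that linearity of \(\eta_{Q_i}\) there forces the defect to vanish. Your Step~1 makes explicit the identification \(\partial_P(S_P)=\{(c,\eta_P(c))\mid c\in M\}\), which the paper uses without comment; using only maximal cones instead of all cones of \(\Sigma\) is an inessential variation, since \(\Sigma\) is complete and every \(c\) lies in a maximal cone.
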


\begin{proof}
Let \(\Sigma\) be a finite common rational polyhedral refinement of the
normal fans of \(P,Q_1\), and \(Q_2\). Equivalently, the three support
functions
$
\eta_P,~\eta_{Q_1},~ \eta_{Q_2}
$
are linear on every cone \(\tau\in\Sigma\). For such a cone, set
\[
B_\tau
:=
\bigl\{(c,\eta_P(c))\mid c\in\tau\cap M\bigr\}
\subset\partial_P(S_P).
\]
Since \(\eta_P\) is linear on \(\tau\), the projection to \(M\)
identifies \(B_\tau\) with the affine semigroup \(\tau\cap M\).
Therefore \(B_\tau\) has a finite Hilbert basis. Adjoin the Hilbert
bases of \(B_\tau\) for all cones \(\tau\in\Sigma\). This adds only
finitely many boundary generators.

Let
\(b=(c,\eta_P(c))\in\partial_P(S_P)\). Choose a cone
\(\tau\in\Sigma\) containing \(c\), express \(b\) as a sum of elements
of the Hilbert basis of \(B_\tau\), and use this expression to define
\(\partial(b)\). All generators occurring in this representative have
first coordinates in the same cone \(\tau\). Since each
\(\eta_{Q_i}\) is linear on \(\tau\),
\[
\eta_{Q_i}(\partial(b))
=
\sum_j k_j\eta_{Q_i}(c_j)
-
\eta_{Q_i}\left(\sum_jk_jc_j\right)
=
0,
\qquad i=1,2.
\]
Finally, every \(s\in S_P\) has the form
$
s=(c,\eta_P(c))+nR^*,
$
$ 
n\in\NN,
$
so the enlarged boundary elements together with \(R^*\) generate
\(S_P\). Lemma~\ref{lem presentation arbitrary generators} concludes the proof.
\end{proof}

From now on, we use this enlarged generating set and these adapted
representatives, retaining the notation \(s_j,x_j,\bfx\), and
\(\partial\). The enlargement only introduces redundant toric
coordinates and does not change \(X_P\); such coordinates may be
eliminated in explicit smaller presentations.

The adapted vanishing also gives, for every
\(\bfa,\bfk\in\NN^r\),
\begin{equation}\label{eq adapted eta additivity}
\eta_{Q_i}(\bfa+\bfk)
=
\eta_{Q_i}(\bfk)
+
\eta_{Q_i}(\partial(\bfk)+\bfa),
\qquad i=1,2.
\end{equation}
Indeed, the projections to \(M\) of \(s_{\bfk}\) and
\(s_{\partial(\bfk)}\) agree, and
\eqref{eq adapted boundary vanishing} applies to
\(\partial(\bfk)\).

Write
\[
t_i:=t_{(m_i,Q_i)},
\qquad
\mathbf t:=(t_1,t_2),
\qquad
\deg(t_i)=m_i.
\]
For
$
\boldsymbol\alpha=(\alpha_1,\alpha_2)\in\NN^2,
$
set
\[
\mathbf t^{\boldsymbol\alpha}
:=
t_1^{\alpha_1}t_2^{\alpha_2},
\qquad
m_{\boldsymbol\alpha}
:=
\alpha_1m_1+\alpha_2m_2.
\]
For \(\bfk\in\NN^r\), write
\[
A_i(\bfk):=\eta_{Q_i}(\bfk),
\qquad
\mathbf A(\bfk):=
\bigl(A_1(\bfk),A_2(\bfk)\bigr),
\]
and
\[
\binom{\mathbf A(\bfk)}{\boldsymbol\alpha}
:=
\binom{A_1(\bfk)}{\alpha_1}
\binom{A_2(\bfk)}{\alpha_2}.
\]
We use the convention that this coefficient is zero if
\(\alpha_i>A_i(\bfk)\) for some \(i\).

For every \(\bfk\in\NN^r\), define
\begin{equation}
\label{eq two summand Fk}
F_{\bfk}(\bfx,u,t_1,t_2)
:=
\bfx^{\bfk}
-
\sum_{\substack{\boldsymbol\alpha\in\NN^2\\
\alpha_i\leq A_i(\bfk),\,i=1,2}}
\binom{\mathbf A(\bfk)}{\boldsymbol\alpha}
\mathbf t^{\boldsymbol\alpha}
\chi_P^{s_{\bfk}-m_{\boldsymbol\alpha}}.
\end{equation}

The characters occurring in \eqref{eq two summand Fk} are well-defined.
Indeed, let \(v\) be a vertex of \(P\). By
\eqref{eq disjoint positive loci}, one of
\(\varphi_{m_1}(v)\) and \(\varphi_{m_2}(v)\) is zero. If, for example,
\(\varphi_{m_2}(v)=0\), then
$
s_{\bfk}-\alpha_1m_1\in\sigma^\vee
$
by \eqref{eq one parameter regularity}, and hence
\[
\begin{aligned}
\left\langle
s_{\bfk}-m_{\boldsymbol\alpha},(v,1)
\right\rangle
&=
\left\langle
s_{\bfk}-\alpha_1m_1,(v,1)
\right\rangle
-
\alpha_2\varphi_{m_2}(v)
\geq0.
\end{aligned}
\]
The other case is identical. Thus
$
s_{\bfk}-m_{\boldsymbol\alpha}\in\sigma^\vee.
$

For every admissible \(\boldsymbol\alpha\), set
\[
\mathbf k_{\boldsymbol\alpha}
:=
\partial\bigl(s_{\mathbf k}-m_{\boldsymbol\alpha}\bigr)
\in\NN^r
\]
and
$
\nu_{\mathbf k,\boldsymbol\alpha}
:=
n_P\bigl(s_{\mathbf k}-m_{\boldsymbol\alpha}\bigr).
$
Thus
\begin{equation} \label{eq two summand boundary decomposition}
s_{\mathbf k}-m_{\boldsymbol\alpha}
=
s_{\mathbf k_{\boldsymbol\alpha}}
+
\nu_{\mathbf k,\boldsymbol\alpha}R^*.
\end{equation}

For \(\bfa,\bfk\in\NN^r\), define
\begin{multline}
\label{eq two summand R}
R_{\bfa,\bfk}
:=
F_{\bfa+\bfk}
-
\bfx^{\bfa}F_{\bfk}
-
u^{\eta_P(\bfk)}F_{\partial(\bfk)+\bfa}
-
\sum_{\substack{\boldsymbol\alpha\in\NN^2\setminus\{\mathbf0\}\\
\alpha_i\leq A_i(\bfk),\,i=1,2}}
u^{\nu_{\bfk,\boldsymbol\alpha}}
\binom{\mathbf A(\bfk)}{\boldsymbol\alpha}
\mathbf t^{\boldsymbol\alpha}
F_{\bfk_{\boldsymbol\alpha}+\bfa}.
\end{multline}

\begin{lemma}
\label{lem eta compatible boundary}
For every \(\bfa\in\NN^r\),
\[
\eta_{Q_i}(\bfk_{\boldsymbol\alpha}+\bfa)
=
\eta_{Q_i}(\partial(\bfk)+\bfa),
\qquad i=1,2.
\]
\end{lemma}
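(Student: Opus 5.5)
The plan is to compute both sides directly from the definition
\[
\eta_Q(\bfk)=\sum_j k_j\eta_Q(c_j)-\eta_Q\Bigl(\sum_j k_jc_j\Bigr).
\]
Two ingredients are needed: the adapted vanishing \eqref{eq adapted boundary vanishing} from Lemma~\ref{lem adapted boundary representatives}, and the compatibility assumption \eqref{eq mutually orthogonal summands}. The argument is parallel to the derivation of \eqref{eq adapted eta additivity}. The one new point is that \(s_{\bfk_{\boldsymbol\alpha}}\) and \(s_{\partial(\bfk)}\) no longer have the same projection to \(M\). Instead, their projections differ by \(\alpha_1\pi_M(m_1)+\alpha_2\pi_M(m_2)\), and this difference is invisible to \(\eta_{Q_i}\).

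\emph{Step 1 (translation invariance).} Let \(c'\in M_\RR\) vanish on \(Q\). Then \(\langle c+\lambda c',v\rangle=\langle c,v\rangle\) for all \(v\in Q\), so
\[
\eta_Q(c+\lambda c')=\eta_Q(c)\qquad\text{for all }\lambda\in\RR .
\]
By \eqref{eq mutually orthogonal summands}, both \(\pi_M(m_1)\) and \(\pi_M(m_2)\) vanish on \(Q_1\) and on \(Q_2\). Hence \(\eta_{Q_i}\) is invariant under translation by any combination of \(\pi_M(m_1)\) and \(\pi_M(m_2)\).

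\emph{Step 2 (reduction via adapted vanishing).} By definition, \(\bfk_{\boldsymbol\alpha}=\partial(s_\bfk-m_{\boldsymbol\alpha})\), and \(\partial(s)=\partial(\partial_P(s))\). So \(\bfk_{\boldsymbol\alpha}\) is an adapted boundary representative, and likewise \(\partial(\bfk)\) is one. Therefore \(\eta_{Q_i}(\bfk_{\boldsymbol\alpha})=0=\eta_{Q_i}(\partial(\bfk))\). Write \(c(\mathbf b):=\pi_M(s_{\mathbf b})\). For \(\mathbf b\) equal to either \(\bfk_{\boldsymbol\alpha}\) or \(\partial(\bfk)\), the vanishing says \(\sum_j b_j\eta_{Q_i}(c_j)=\eta_{Q_i}(c(\mathbf b))\). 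Substituting this into the definition gives
\[
\eta_{Q_i}(\mathbf b+\bfa)=\eta_{Q_i}(c(\mathbf b))+\sum_j a_j\eta_{Q_i}(c_j)-\eta_{Q_i}\bigl(c(\mathbf b)+c(\bfa)\bigr).
\]
It therefore suffices to show that replacing \(c(\bfk_{\boldsymbol\alpha})\) by \(c(\partial(\bfk))\) changes neither \(\eta_{Q_i}(c(\mathbf b))\) nor \(\eta_{Q_i}(c(\mathbf b)+c(\bfa))\).

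\emph{Step 3 (comparing projections).} Since \(\pi_M(R^*)=0\), applying \(\pi_M\) to \eqref{eq two summand boundary decomposition} gives
\[
c(\bfk_{\boldsymbol\alpha})=\pi_M(s_\bfk)-\alpha_1\pi_M(m_1)-\alpha_2\pi_M(m_2).
\]
Similarly, \(c(\partial(\bfk))=\pi_M(s_\bfk)\), because \(s_\bfk=s_{\partial(\bfk)}+\eta_P(\bfk)R^*\). The two vectors thus differ by a combination of \(\pi_M(m_1)\) and \(\pi_M(m_2)\). The same difference persists after adding \(c(\bfa)\). Step 1 now gives equality of both terms, and the lemma follows.

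I expect no serious obstacle. The only care needed is in Step 2: one must check that \(\bfk_{\boldsymbol\alpha}\) really is one of the adapted representatives fixed in Lemma~\ref{lem adapted boundary representatives}, which holds by the definition \(\partial(s):=\partial(\partial_P(s))\).
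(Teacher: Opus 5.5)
Your proof is correct and follows essentially the same route as the paper. Both arguments use the adapted vanishing $\eta_{Q_i}(\partial(\bfk))=\eta_{Q_i}(\bfk_{\boldsymbol\alpha})=0$ to reduce to comparing $\eta_{Q_i}$ at the projections (with and without $c_{\bfa}$ added), and then observe that these projections differ by $\pi_M(m_{\boldsymbol\alpha})$, which vanishes on $Q_i$ by mutual compatibility; your Steps 1 and 2 make explicit what the paper compresses into its final sentence.
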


\begin{proof}
Fix \(i\in\{1,2\}\), and write
\[
c_{\bfb}:=\sum_{j=1}^r b_jc_j
\qquad
\text{for }\bfb\in\NN^r.
\]
Since \(\partial(\bfk)\) and \(\bfk_{\boldsymbol\alpha}\) are the chosen
adapted representatives of boundary elements,
Lemma~\ref{lem adapted boundary representatives} gives
\[
\eta_{Q_i}(\partial(\bfk))
=
\eta_{Q_i}(\bfk_{\boldsymbol\alpha})
=
0.
\]
Moreover, projecting the boundary decomposition to \(M\) yields
$
c_{\bfk_{\boldsymbol\alpha}}
=
c_{\partial(\bfk)}
-
\pi_M(m_{\boldsymbol\alpha}).
$
By mutual compatibility,
\[
Q_i\subset\ker\pi_M(m_1)\cap\ker\pi_M(m_2),
\]
and hence \(\pi_M(m_{\boldsymbol\alpha})\) vanishes identically on
\(Q_i\). Therefore
$
\eta_{Q_i}(c_{\bfk_{\boldsymbol\alpha}})
=
\eta_{Q_i}(c_{\partial(\bfk)})
$
and
\[
\eta_{Q_i}(c_{\bfa}+c_{\bfk_{\boldsymbol\alpha}})
=
\eta_{Q_i}(c_{\bfa}+c_{\partial(\bfk)}).
\]
Using the definition of \(\eta_{Q_i}(\,\cdot\,)\) and the two vanishing
equalities above, we conclude that
\[
\eta_{Q_i}(\bfk_{\boldsymbol\alpha}+\bfa)
=
\eta_{Q_i}(\partial(\bfk)+\bfa),
\]
as required.
\end{proof}

\begin{proposition}
\label{pro two summand linear relations}
For every \(\bfa,\bfk\in\NN^r\), the expression
\(R_{\bfa,\bfk}\) is a linear relation among the equations
\(F_{\bfb}\).
\end{proposition}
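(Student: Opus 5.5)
The plan is to show that \(R_{\bfa,\bfk}\) vanishes identically in \(\CC[\bfx,u,t_1,t_2]\). In other words, \eqref{eq two summand R} is a genuine \(\CC[\bfx,u,\mathbf t]\)-linear relation among the \(F_{\bfb}\). The first step is to absorb the term \(u^{\eta_P(\bfk)}F_{\partial(\bfk)+\bfa}\) into the final sum as its \(\boldsymbol\alpha=\mathbf 0\) summand. This works because \(s_{\bfk}=s_{\partial(\bfk)}+\eta_P(\bfk)R^*\), which gives \(\bfk_{\mathbf 0}=\partial(\bfk)\) and \(\nu_{\bfk,\mathbf 0}=\eta_P(\bfk)\). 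Then
\[
R_{\bfa,\bfk}=F_{\bfa+\bfk}-\bfx^{\bfa}F_{\bfk}-\sum_{\boldsymbol\alpha\leq\mathbf A(\bfk)}u^{\nu_{\bfk,\boldsymbol\alpha}}\binom{\mathbf A(\bfk)}{\boldsymbol\alpha}\mathbf t^{\boldsymbol\alpha}F_{\bfk_{\boldsymbol\alpha}+\bfa}.
\]
I will then expand every \(F\) using \eqref{eq two summand Fk}.

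A basic identity is used throughout. For \(s\in S_P\) and \(n\in\NN\), one has \(u^n\chi_P^s=\chi_P^{s+nR^*}\), since \(\partial_P(s+nR^*)=\partial_P(s)\) and \(n_P(s+nR^*)=n_P(s)+n\). With it, the expansion sorts into two groups.

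\emph{Pure monomial terms.} The terms \(\bfx^{\bfa+\bfk}\) and \(-\bfx^{\bfa}\bfx^{\bfk}\) cancel. Next, by \eqref{eq two summand boundary decomposition} we have
\[
\bfx^{\bfa}\chi_P^{s_{\bfk}-m_{\boldsymbol\alpha}}=\bfx^{\bfa}\bfx^{\bfk_{\boldsymbol\alpha}}u^{\nu_{\bfk,\boldsymbol\alpha}}.
\]
So the \(\boldsymbol\alpha\)-summand coming from \(-\bfx^{\bfa}F_{\bfk}\) cancels exactly against the leading monomial \(-u^{\nu_{\bfk,\boldsymbol\alpha}}\binom{\mathbf A(\bfk)}{\boldsymbol\alpha}\mathbf t^{\boldsymbol\alpha}\bfx^{\bfk_{\boldsymbol\alpha}+\bfa}\) of the last sum.

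\emph{Deformation terms.} What remains is
\[
-\sum_{\boldsymbol\gamma}\binom{\mathbf A(\bfa+\bfk)}{\boldsymbol\gamma}\mathbf t^{\boldsymbol\gamma}\chi_P^{s_{\bfa+\bfk}-m_{\boldsymbol\gamma}}
+\sum_{\boldsymbol\alpha,\boldsymbol\beta}\binom{\mathbf A(\bfk)}{\boldsymbol\alpha}\binom{\mathbf A(\bfk_{\boldsymbol\alpha}+\bfa)}{\boldsymbol\beta}\mathbf t^{\boldsymbol\alpha+\boldsymbol\beta}u^{\nu_{\bfk,\boldsymbol\alpha}}\chi_P^{s_{\bfk_{\boldsymbol\alpha}+\bfa}-m_{\boldsymbol\beta}}.
\]
By the basic identity and \eqref{eq two summand boundary decomposition}, the character in the second sum equals \(\chi_P^{s_{\bfa+\bfk}-m_{\boldsymbol\alpha+\boldsymbol\beta}}\). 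It is well defined, since \(s_{\bfk_{\boldsymbol\alpha}+\bfa}-m_{\boldsymbol\beta}\in\sigma^\vee\) by the regularity argument given after \eqref{eq two summand Fk}.

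The coefficients are handled by Lemma~\ref{lem eta compatible boundary} together with \eqref{eq adapted eta additivity}. These give
\[
A_i(\bfk_{\boldsymbol\alpha}+\bfa)=A_i(\partial(\bfk)+\bfa)=A_i(\bfa+\bfk)-A_i(\bfk),
\]
which in particular does not depend on \(\boldsymbol\alpha\). Grouping by \(\boldsymbol\gamma=\boldsymbol\alpha+\boldsymbol\beta\), the coefficient of \(\mathbf t^{\boldsymbol\gamma}\chi_P^{s_{\bfa+\bfk}-m_{\boldsymbol\gamma}}\) in the second sum is
\[
\prod_{i=1,2}\sum_{\alpha_i+\beta_i=\gamma_i}\binom{A_i(\bfk)}{\alpha_i}\binom{A_i(\bfa+\bfk)-A_i(\bfk)}{\beta_i}=\binom{\mathbf A(\bfa+\bfk)}{\boldsymbol\gamma}
\]
by Vandermonde's identity. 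The index ranges match because of the convention that out-of-range binomials vanish. The two sums therefore cancel, and \(R_{\bfa,\bfk}=0\).

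The main point needing care is the character bookkeeping. One must check that \(u^{\nu}\chi_P^{s}\) really is the normal-form monomial \(\chi_P^{s+\nu R^*}\), so that the monomials from different \((\boldsymbol\alpha,\boldsymbol\beta)\) with the same sum \(\boldsymbol\gamma\) literally coincide. One must also check that the upper limit \(\mathbf A(\bfk_{\boldsymbol\alpha}+\bfa)\) is independent of \(\boldsymbol\alpha\). That independence is precisely what the adapted representatives of Lemma~\ref{lem adapted boundary representatives} and the compatibility hypothesis \eqref{eq mutually orthogonal summands} provide, via Lemma~\ref{lem eta compatible boundary}.
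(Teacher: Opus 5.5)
Your proposal is correct and follows the paper's route. You fold $u^{\eta_P(\bfk)}F_{\partial(\bfk)+\bfa}$ in as the $\boldsymbol\alpha=\mathbf 0$ term and use Lemma~\ref{lem eta compatible boundary} with \eqref{eq adapted eta additivity}, so the inner limit $\mathbf A(\bfa+\bfk)-\mathbf A(\bfk)$ does not depend on $\boldsymbol\alpha$; then you match characters via the boundary decomposition and cancel coefficients by Vandermonde's convolution in each variable. Your separate cancellation of $\bfx^{\bfa}\chi_P^{s_{\bfk}-m_{\boldsymbol\alpha}}=\bfx^{\bfk_{\boldsymbol\alpha}+\bfa}u^{\nu_{\bfk,\boldsymbol\alpha}}$, together with the identity $u^n\chi_P^s=\chi_P^{s+nR^*}$, spells out what the paper compresses into ``all terms of degree $\mathbf t^{\boldsymbol\beta}$ are multiples of the same character.''
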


\begin{proof}
Set
\[
A_i:=\eta_{Q_i}(\bfk),
\qquad
B_i:=\eta_{Q_i}(\partial(\bfk)+\bfa),
\qquad i=1,2.
\]
By \eqref{eq adapted eta additivity},
$
\eta_{Q_i}(\bfa+\bfk)=A_i+B_i.
$
Moreover, Lemma \ref{lem eta compatible boundary} implies
\[
\eta_{Q_i}(\bfk_{\boldsymbol\alpha}+\bfa)=B_i,
\qquad i=1,2.
\]

Expanding \eqref{eq two summand R} and using
\eqref{eq two summand boundary decomposition}, all terms of degree
\(\mathbf t^{\boldsymbol\beta}\) are multiples of the same character.
Their common coefficient is
\[
-
\binom{A_1+B_1}{\beta_1}
\binom{A_2+B_2}{\beta_2}
+
\sum_{\boldsymbol\alpha\leq\boldsymbol\beta}
\binom{A_1}{\alpha_1}
\binom{A_2}{\alpha_2}
\binom{B_1}{\beta_1-\alpha_1}
\binom{B_2}{\beta_2-\alpha_2},
\]
{which is zero by the Vandermonde's convolution applied twice \cite[Equation~(5.27), pp.~170]{GKP94}.}
Hence \(R_{\bfa,\bfk}=0\).
\end{proof}

\begin{corollary}
\label{cor two compatible deformations}
The equations \(F_{\bfk}\) define a formal two-parameter deformation over 
$
\CC[[t_1,t_2]]
$
of \(X_P\). 
The restriction to the \(t_i\)-axis is the one-parameter deformation
associated with \((m_i,Q_i)\), for \(i=1,2\).
\end{corollary}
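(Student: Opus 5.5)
We reduce flatness to the standard lifting-of-relations criterion. Write \(\mathcal P=\CC[\bfx,u]\) and \(\mathcal P[[\mathbf t]]=\CC[[t_1,t_2]][\bfx,u]\) with its \(\mathbf t\)-adic completion. Let \(I_{\mathbf t}\) be the ideal generated by all \(F_{\bfk}\). The quotient \(\mathcal P[[\mathbf t]]/I_{\mathbf t}\) is flat over \(\CC[[t_1,t_2]]\), and hence a formal deformation of \(X_P\), provided two things hold. First, the \(F_{\bfk}\) reduce modulo \((t_1,t_2)\) to a generating set of \(I_P\). Second, every generator of the module of relations among these reductions lifts to a relation among the \(F_{\bfk}\). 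This is the relation-lifting flatness criterion in \cite[Chapter~3]{Ser06}, and it can be checked on each Artinian truncation \(\CC[t_1,t_2]/(t_1,t_2)^{k+1}\).

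The first condition is immediate. Setting \(\mathbf t=0\) in \eqref{eq two summand Fk} leaves only the term \(\boldsymbol\alpha=\mathbf 0\). Since \(\binom{\mathbf A(\bfk)}{\mathbf 0}=1\), we get \(F_{\bfk}|_{\mathbf t=0}=\bfx^{\bfk}-\chi_P^{s_{\bfk}}=f_{\bfk}\). By Lemma~\ref{lem presentation arbitrary generators}, applied to the enlarged presentation as allowed by Lemma~\ref{lem adapted boundary representatives}, these \(f_{\bfk}\) generate \(I_P\).

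For the second condition, the same lemma says the syzygies are generated by the \(\rho_{\bfa,\bfk}\). Consider the expression \(R_{\bfa,\bfk}\) in \eqref{eq two summand R}. Its coefficient vector, with entry \(1\) at \(e_{\bfa+\bfk}\), \(-\bfx^{\bfa}\) at \(e_{\bfk}\), \(-u^{\eta_P(\bfk)}\) at \(e_{\partial(\bfk)+\bfa}\), and the \(\mathbf t\)-multiples at \(e_{\bfk_{\boldsymbol\alpha}+\bfa}\), reduces modulo \(\mathbf t\) to \(\rho_{\bfa,\bfk}\). Every extra term carries a factor \(\mathbf t^{\boldsymbol\alpha}\) with \(\boldsymbol\alpha\neq\mathbf 0\). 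By Proposition~\ref{pro two summand linear relations}, \(R_{\bfa,\bfk}=0\) identically, so each \(\rho_{\bfa,\bfk}\) lifts. Two finiteness points need checking:
\begin{itemize}
\item each \(F_{\bfk}\) is a polynomial in \(\mathbf t\), because the sum over \(\boldsymbol\alpha\le\mathbf A(\bfk)\) is finite;
\item each relation involves only finitely many generators.
\end{itemize}
By Noetherianity, finitely many \(F_{\bfk}\) and \(R_{\bfa,\bfk}\) suffice on every truncation. This gives flatness at every order, compatibly, and hence a formal deformation over \(\CC[[t_1,t_2]]\).

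For the restriction statement, set \(t_2=0\). Only the terms with \(\alpha_2=0\) survive, and \(F_{\bfk}\) becomes
\[
\bfx^{\bfk}-\sum_{\alpha_1\le A_1(\bfk)}\binom{A_1(\bfk)}{\alpha_1}t_1^{\alpha_1}\chi_P^{s_{\bfk}-\alpha_1m_1}.
\]
This is the one-parameter family of \cite[Section~3]{Fil26} associated with \((m_1,Q_1)\).

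The main obstacle is this identification. The present presentation uses an enlarged generating set and adapted boundary representatives, whereas \cite{Fil26} uses the Hilbert-basis presentation. The plan is to argue as follows:
\begin{itemize}
\item adjoining redundant toric coordinates and changing boundary representatives does not change the isomorphism class of the deformation, since the redundant variables can be eliminated over \(\CC[[t_1]]\);
\item \cite[Corollary~3.9]{Fil26} applies verbatim with any boundary representatives satisfying \(\eta_{Q_1}(\partial(b))=0\), which is \eqref{eq adapted boundary vanishing}.
\end{itemize}
The case \(t_1=0\) is symmetric.
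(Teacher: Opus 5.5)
Your proposal is correct and follows the paper's own argument. The \(F_{\bfk}\) reduce modulo \((t_1,t_2)\) to the \(f_{\bfk}\), and the coefficient vectors of the relations \(R_{\bfa,\bfk}\), which vanish by Proposition~\ref{pro two summand linear relations}, lift the generating syzygies \(\rho_{\bfa,\bfk}\) of Lemma~\ref{lem presentation arbitrary generators}, so the equational flatness criterion gives flatness on every Artinian truncation. Your explicit treatment of the \(t_i\)-axis restriction, where setting the other parameter to zero leaves only the multi-indices \(\boldsymbol\alpha\) with that component zero, spells out a point the paper's proof leaves implicit.
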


\begin{proof}
Modulo \((t_1,t_2)\), the equations \(F_{\bfk}\) reduce to
\(f_{\bfk}\), and the relations \(R_{\bfa,\bfk}\) reduce to
\(r_{\bfa,\bfk}\). By
Lemma~\ref{lem presentation arbitrary generators}, the latter generate
all relations among the equations of the central fibre, while
Proposition~\ref{pro two summand linear relations} provides their
lifts. {By the equational flatness criterion
\cite[Lemma~3.5]{Fil26}, the equations therefore define a
formal flat deformation over $\CC[[t_1,t_2]]$.}
\end{proof}
\medskip

{For every facet \(F\subset P\), let \(s_F\in\widetilde M\) denote the
primitive generator of the ray of \(\sigma^\vee\) corresponding to \(F\).
Equivalently, \(s_F\) is the primitive inward support element satisfying
\[
\langle s_F,(v,1)\rangle=0
\qquad\text{for every }v\in F.
\]}

\begin{corollary}
\label{cor parallel squares simultaneous deformation}
Let \(P\) be an empty three-dimensional lattice polytope, and let
\(F_1,F_2\subset P\) be square facets satisfying
\(s_{F_1}+s_{F_2}=R^*\). For \(i=1,2\), let
\((R^*-s_{F_i},Q_i)\) be a deformation pair of \(P\). Then the
corresponding one-parameter deformations extend to a formal
two-parameter deformation over \(\CC[[t_1,t_2]]\).
\end{corollary}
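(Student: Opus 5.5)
The plan is to deduce the statement directly from Corollary~\ref{cor two compatible deformations}. For this we verify its two hypotheses for the pairs \((m_i,Q_i)\) with \(m_i:=R^*-s_{F_i}\): mutual compatibility \eqref{eq mutually orthogonal summands} and the disjointness condition \eqref{eq disjoint positive loci}. Once both hold, Corollary~\ref{cor two compatible deformations} gives a formal deformation over \(\CC[[t_1,t_2]]\) whose restriction to each \(t_i\)-axis is the one-parameter deformation of \((m_i,Q_i)\), which is exactly the claim.

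First we compute the linear functions involved. Since \(\varphi_{R^*}\equiv1\) on \(P\), we get \(\varphi_{m_i}(v)=1-\varphi_{s_{F_i}}(v)\). Because \(s_{F_1}+s_{F_2}=R^*\), we have \(\varphi_{s_{F_1}}+\varphi_{s_{F_2}}\equiv1\) on \(P\). Both functions are nonnegative on \(P\) and integral on lattice points, so every vertex \(v\) satisfies \(\{\varphi_{s_{F_1}}(v),\varphi_{s_{F_2}}(v)\}=\{0,1\}\). In particular \(P\) is the convex hull of \(F_1\cup F_2\), with \(F_1,F_2\) parallel at lattice distance one. This gives \(\varphi_{m_1}=\varphi_{s_{F_2}}\) and \(\varphi_{m_2}=\varphi_{s_{F_1}}\), so at every vertex exactly one of \(\varphi_{m_1}(v),\varphi_{m_2}(v)\) vanishes. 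This is \eqref{eq disjoint positive loci}.

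For compatibility, note that \(\pi_M(m_1)=-\pi_M(s_{F_1})\) and \(\pi_M(m_2)=-\pi_M(s_{F_2})=\pi_M(s_{F_1})\), since \(\pi_M(R^*)=0\). Hence \(\ker\pi_M(m_1)=\ker\pi_M(m_2)\). By Definition~\ref{def deformation pair}, each \(Q_i\) lies in \(\ker\pi_M(m_i)\), so both \(Q_1\) and \(Q_2\) lie in the common kernel, which is \eqref{eq mutually orthogonal summands}.

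I expect no serious obstacle. The only point needing care is that the nonnegativity and integrality of \(\varphi_{s_{F_i}}\) at vertices really force the values \(0\) and \(1\); emptiness of \(P\) is not needed for that step. One should also note that the generating-set enlargement of Lemma~\ref{lem adapted boundary representatives} only adds redundant coordinates, so the resulting family is a deformation of \(X_P\) itself.
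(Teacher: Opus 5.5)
Your proposal is correct and follows essentially the same route as the paper. The paper likewise gets \eqref{eq disjoint positive loci} from the fact that \(\varphi_{s_{F_1}}(v)\) and \(\varphi_{s_{F_2}}(v)\) are nonnegative integers summing to \(1\), gets \eqref{eq mutually orthogonal summands} from \(\ker\pi_M(R^*-s_{F_1})=\ker\pi_M(R^*-s_{F_2})\), and then invokes Corollary~\ref{cor two compatible deformations}; your observation that \(P=\operatorname{conv}(F_1\cup F_2)\) is true but not needed.
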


\begin{proof}
The relation \(s_{F_1}+s_{F_2}=R^*\) gives
\(R^*-s_{F_1}=s_{F_2}\), \(R^*-s_{F_2}=s_{F_1}\), and
$$(R^*-s_{F_1})+(R^*-s_{F_2})=R^*.$$ Since
\(s_{F_1},s_{F_2}\in\sigma^\vee\), for every vertex \(v\) of \(P\),
the values \(\varphi_{R^*-s_{F_1}}(v)\) and
\(\varphi_{R^*-s_{F_2}}(v)\) are nonnegative integers whose sum is
\(\varphi_{R^*}(v)=1\). Hence
\(\varphi_{R^*-s_{F_1}}(v)\varphi_{R^*-s_{F_2}}(v)=0\).

Moreover, since \(\pi_M(R^*)=0\), we have
\(\pi_M(R^*-s_{F_1})=-\pi_M(R^*-s_{F_2})\), and therefore
$$\ker\pi_M(R^*-s_{F_1})=\ker\pi_M(R^*-s_{F_2}).$$
Together with \(Q_i\subset\ker\pi_M(R^*-s_{F_i})\) for \(i=1,2\),
this shows that the two deformation pairs satisfy the compatibility
conditions. The result now follows from
Corollary~\ref{cor two compatible deformations}.
\end{proof}

{
The following example makes the preceding construction explicit for the
eight-vertex polytope of
Example~\ref{ex eight vertex polytope hilbert basis}. Its two pairs of
opposite square facets satisfy the hypotheses of
Corollary~\ref{cor parallel squares simultaneous deformation}, and we
write down the resulting two-parameter deformations explicitly.
}

\begin{example}\label{ex eight vertex two parameter deformations}
Let \(P\) be the polytope of
Example~\ref{ex eight vertex polytope hilbert basis}, retain the notation
\(F_{y,0},F_{y,1},F_{z,0},F_{z,1}\) for its four square facets, and use
the coordinate-ring presentation of
Example~\ref{ex eight vertex polytope coordinate ring}. Recall that
their primitive support elements are
\(s_{F_{y,0}}=s_2\), \(s_{F_{y,1}}=s_7\),
\(s_{F_{z,0}}=s_3\), and \(s_{F_{z,1}}=s_8\).

In particular,
\[
s_{F_{y,0}}+s_{F_{y,1}}=s_2+s_7=R^*,
\qquad
s_{F_{z,0}}+s_{F_{z,1}}=s_3+s_8=R^*.
\]

Let \(e_1,e_2,e_3\) be the standard basis of \(N=\ZZ^3\), and set
\(L:=[0,e_1]\). The decompositions
\[
\begin{alignedat}{2}
F_{z,0}&=[0,e_1]+[0,e_2],
&\qquad\qquad
F_{z,1}&=e_3+[0,e_1]+[0,e_1+e_2],
\\
F_{y,0}&=[0,e_1]+[0,e_3],
&\qquad\qquad
F_{y,1}&=e_2+[0,e_1]+[0,e_1+e_3].
\end{alignedat}
\]
show that \(L\) is a Minkowski summand of each square facet. For
\(m_F:=R^*-s_F\), the only nontrivial Minkowski-summand condition in
Definition~\ref{def deformation pair} is the one for the level-one
section
\[
P\cap(\varphi_{m_F}=1)=F;
\]
the condition at level zero is automatic because \(0L=\{0\}\).
Consequently, \((m_F,L)\) is a deformation pair for each of the four
square facets. The two relations above and
Corollary~\ref{cor parallel squares simultaneous deformation} therefore
give one two-parameter deformation for each pair of opposite square
facets.

For \(c=(c_x,c_y,c_z)\in M\), one has
\(\eta_L(c)=\max\{0,-c_x\}\). Consequently, for
\(\bfk=(k_1,\ldots,k_8)\),
\begin{equation}\label{eq eta L eight vertex}
\eta_L(\bfk)
=
\min\{k_1+k_4,k_5+k_6\}.
\end{equation}
Among the ten generators \(f_{ij}\) of the toric ideal displayed in
Example~\ref{ex eight vertex polytope coordinate ring}, the equality
\(\eta_L(\mathbf e_i+\mathbf e_j)=1\) holds precisely for
\(f_{15}, f_{16}, f_{45}\), and \(f_{46}\).

It is zero for the remaining six generators, which therefore remain
unchanged in both families.

\smallskip
\noindent\emph{The pair \(F_{z,0},F_{z,1}\).}
The corresponding deformation degrees are
\[
m_{z,0}:=R^*-s_3=s_8,
\qquad
m_{z,1}:=R^*-s_8=s_3.
\]
Let \(\tau_0,\tau_1\) be the associated parameters, with
\(\deg(\tau_0)=s_8\) and \(\deg(\tau_1)=s_3\).
Formula~\eqref{eq two summand Fk} gives
\[
\begin{aligned}
F^{z}_{15}
&=x_1x_5-x_2u-\tau_0x_2x_3-\tau_1x_2x_8-\tau_0\tau_1x_2,
\\
F^{z}_{16}
&=x_1x_6-x_3u-\tau_0x_3^2-\tau_1u-\tau_0\tau_1x_3,
\\
F^{z}_{45}
&=x_4x_5-x_8u-\tau_0u-\tau_1x_8^2-\tau_0\tau_1x_8,
\\
F^{z}_{46}
&=x_4x_6-x_7u-\tau_0x_3x_7-\tau_1x_7x_8-\tau_0\tau_1x_7.
\end{aligned}
\]

\smallskip
\noindent\emph{The pair \(F_{y,0},F_{y,1}\).}
The corresponding deformation degrees are
\[
m_{y,0}:=R^*-s_2=s_7,
\qquad
m_{y,1}:=R^*-s_7=s_2.
\]
Let \(\lambda_0,\lambda_1\) be the associated parameters, with
\(\deg(\lambda_0)=s_7\) and \(\deg(\lambda_1)=s_2\).
In this case, formula~\eqref{eq two summand Fk} gives
\[
\begin{aligned}
F^{y}_{15}
&=x_1x_5-x_2u
-\lambda_0x_2^2-\lambda_1u-\lambda_0\lambda_1x_2,
\\
F^{y}_{16}
&=x_1x_6-x_3u
-\lambda_0x_2x_3-\lambda_1x_3x_7-\lambda_0\lambda_1x_3,
\\
F^{y}_{45}
&=x_4x_5-x_8u
-\lambda_0x_2x_8-\lambda_1x_7x_8-\lambda_0\lambda_1x_8,
\\
F^{y}_{46}
&=x_4x_6-x_7u
-\lambda_0u-\lambda_1x_7^2-\lambda_0\lambda_1x_7.
\end{aligned}
\]
Setting the two parameters equal to zero recovers the defining
equations of \(X_P\) in each case.
\end{example}

\section{Affine terminal Gorenstein toric varieties}\label{sec 4 affine gor ter}

In this section, we study the tangent and obstruction spaces of affine terminal Gorenstein toric varieties. We give a combinatorial description of the nonzero homogeneous components of $T_X^1$ and $T_X^2$, and show that $T_X^2=0$ when \(\dim X\leq3\). In particular, affine terminal Gorenstein toric varieties of dimension at most three are unobstructed.

The following two lemmas are well known; we include brief proofs for
convenience.

\begin{lemma}\label{lem empty polytope vertices}
Let \(P\subset N_{\mathbb R}\) be an empty lattice polytope of
dimension \(d\), that is,
$
P\cap N=\operatorname{Vert}(P).
$
Then
$
\#\operatorname{Vert}(P)\leq 2^d.
$
\end{lemma}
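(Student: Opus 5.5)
The plan is the standard parity (pigeonhole) argument modulo \(2\). Consider the reduction map
\[
N\longrightarrow N/2N\cong(\ZZ/2\ZZ)^d,
\]
and restrict it to \(\operatorname{Vert}(P)\). Since the target has exactly \(2^d\) elements, it suffices to show that this restriction is injective. Here we use that \(P\) is full-dimensional in a rank-\(d\) lattice. If \(P\) is not full-dimensional, we first replace \(N\) by the saturated sublattice \(\operatorname{aff}(P)\cap N\), translated so that it contains a vertex. This is a lattice of rank \(\dim P=d\), and the lattice points of \(P\) are unchanged.

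For injectivity, suppose two distinct vertices \(v,w\in\operatorname{Vert}(P)\) satisfy \(v\equiv w \pmod{2N}\). Then \(v-w\in 2N\), so the midpoint
\[
\tfrac12(v+w)=w+\tfrac12(v-w)
\]
is a lattice point. By convexity it lies in \(P\), on the segment \([v,w]\). It is distinct from both \(v\) and \(w\) because \(v\neq w\).

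It remains to check that this midpoint is not a vertex of \(P\). A vertex cannot be written as a proper convex combination \(\tfrac12 v+\tfrac12 w\) of two distinct points of \(P\), by the definition of an extreme point. Hence \(\tfrac12(v+w)\in (P\cap N)\setminus\operatorname{Vert}(P)\), which contradicts emptiness. Therefore distinct vertices have distinct residues modulo \(2N\), and so
\[
\#\operatorname{Vert}(P)\leq \#(N/2N)=2^d.
\]

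There is no real obstacle in this argument. The only point needing care is the reduction to the full-rank case when \(\dim P<\operatorname{rank}N\), which is handled by passing to the affine sublattice as above.
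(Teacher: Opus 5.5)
Your proof is correct and follows the paper's argument: reduce to the full-rank lattice \(\operatorname{span}_{\RR}(P-P)\cap N\), apply the pigeonhole principle to \(N/2N\cong(\ZZ/2\ZZ)^d\), and note that the midpoint of two congruent vertices is a lattice point of \(P\) that is not a vertex. Your extra remark that this midpoint is not an extreme point makes explicit what the paper expresses by saying it lies in the relative interior of \([v,w]\).
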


\begin{proof}
After replacing the ambient lattice by
$
\operatorname{span}_{\mathbb R}(P-P)\cap N,
$
we may assume that \(N\cong \mathbb Z^d\) and that \(P\) is
full-dimensional. There are exactly \(2^d\) congruence classes in
$
N/2N\cong(\mathbb Z/2\mathbb Z)^d.
$
If \(P\) had more than \(2^d\) vertices, then two distinct vertices
\(v,w\) would be congruent modulo \(2N\). Hence
$
\frac{v+w}{2}\in N.
$
Since this point lies in the relative interior of the segment
\([v,w]\subset P\), it is a non-vertex lattice point of \(P\), which
contradicts the assumption that \(P\) is empty.
\end{proof}

\begin{lemma}\label{lem empty lattice polygons}
Let \(P\subset N_{\mathbb R}\), where \(N\simeq\mathbb Z^2\), be an
empty two-dimensional lattice polytope. Then \(P\) is lattice
equivalent to either the standard triangle
\[
\Delta_2:=\operatorname{conv}\{(0,0),(1,0),(0,1)\}
\]
or the unit square
\[
\square:=\operatorname{conv}\{(0,0),(1,0),(0,1),(1,1)\}.
\]
\end{lemma}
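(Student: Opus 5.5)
We need to prove that an empty lattice polygon is equivalent to \(\Delta_2\) or \(\square\). The plan is to combine the vertex bound of Lemma~\ref{lem empty polytope vertices} with Pick's theorem.

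First, as in the proof of Lemma~\ref{lem empty polytope vertices}, we may assume \(N=\ZZ^2\) and \(P\) full-dimensional. That lemma then gives \(\#\operatorname{Vert}(P)\le 4\), so \(P\) is a triangle or a quadrilateral. Since \(P\) is empty, it has no interior lattice points and its only boundary lattice points are its vertices. Pick's theorem, \(\operatorname{Area}(P)=I+B/2-1\), then gives \(\operatorname{Area}(P)=B/2-1\), with \(B=\#\operatorname{Vert}(P)\).

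\emph{Triangle case.} Here \(B=3\), so \(\operatorname{Area}(P)=1/2\). Translate one vertex to the origin, so the other two vertices are \(u,w\in\ZZ^2\) with \(|\det(u,w)|=1\). The linear map sending \(e_1\mapsto u\) and \(e_2\mapsto w\) lies in \(\operatorname{GL}_2(\ZZ)\), and it carries \(\Delta_2\) onto \(P\).

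\emph{Quadrilateral case.} Here \(B=4\), so \(\operatorname{Area}(P)=1\). Label the vertices cyclically \(v_0,v_1,v_2,v_3\), and cut \(P\) along the diagonal \([v_0,v_2]\) into two triangles. Both triangles are empty lattice triangles: they lie in \(P\), and the diagonal contains no lattice points other than its endpoints. By the triangle case each has area \(1/2\), so after a unimodular map we may take \(v_0=0\), \(v_1=e_1\), \(v_2=e_2\)... but this ordering puts \(v_2\) adjacent to \(v_1\) in a way that conflicts with \([v_0,v_2]\) being a diagonal. The labeling should be adjusted so that the normalized triangle is \(\operatorname{conv}\{v_0,v_1,v_2\}=\Delta_2\) with \(v_1=e_1\) and \(v_2\) chosen so that \([v_0,v_2]\) is the diagonal, for instance \(v_0=0\), \(v_1=e_1\), \(v_2=e_1+e_2\) after a further unimodular change.

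With that normalization, \(v_3\) lies on the opposite side of the diagonal line through \(0\) and \(e_1+e_2\). The triangle \(\operatorname{conv}\{0,e_1+e_2,v_3\}\) has area \(1/2\), which gives \(|\det(e_1+e_2,v_3)|=1\). Hence \(v_3=(a,a+1)\) for some \(a\in\ZZ\). Convexity of \(P\) at its vertices then constrains \(a\): the four points must be in convex position.

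I expect the main work to be pinning down \(a\), equivalently showing that the shear \((x,y)\mapsto(x,y-ax)\), which fixes \(0\) and \(e_1\), sends \(v_3\) to \(e_2\) while keeping \(v_2\) in place. The shear preserves the diagonal line only when \(a=0\), so the cleaner route is to use emptiness directly. If \(a\ne0\), I would check that \(P\) contains an extra lattice point such as \(e_2\) or \(e_1+e_2\) in its interior or on an edge. A short case check handles this: for \(a\ge1\) the point \(e_2\) lies in \(P\), and for \(a\le-1\) convexity fails or an edge contains an extra lattice point.

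Once \(a=0\), we have \(P=\operatorname{conv}\{0,e_1,e_1+e_2,e_2\}=\square\), which completes the proof.
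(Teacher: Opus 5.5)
Your overall route is the paper's: the vertex bound, Pick's formula, the unimodular triangle case, then locating the fourth vertex of a quadrilateral of area $1$. The labeling worry is unfounded. The normalization $v_0=0$, $v_1=e_1$, $v_2=e_2$ that you discarded is exactly the one the paper uses: $[0,e_2]$ is then the diagonal, and there is no conflict. Your alternative $v_2=e_1+e_2$ is also legitimate, and your derivation of $v_3=(a,a+1)$ is correct.

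The gap is in the step you yourself call the main work, namely forcing $a=0$. Your case check is false for $a\geq1$. Every point of $\operatorname{conv}\{0,e_1,e_1+e_2,(a,a+1)\}$ satisfies $y\leq\frac{a+1}{a}x$, so $e_2\notin P$. For $a=1$, for instance, the hull is the triangle $\operatorname{conv}\{0,e_1,(1,2)\}$. The shear remark does not help either, since the shear sends $(a,a+1)$ to $(a,a+1-a^2)$. As written, nothing excludes $a\geq1$, and the case $a\leq-1$ is only asserted.

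The repair is short. What fails for $a\neq0$ is convex position, not emptiness. For $a\geq1$,
\[
e_1+e_2=\tfrac{a-1}{a+1}\cdot 0+\tfrac{1}{a+1}\,e_1+\tfrac{1}{a+1}\,(a,a+1),
\]
so $v_2$ is not a vertex. For $a\leq-1$,
\[
0=\tfrac{1}{1-a}\,e_1-\tfrac{a+1}{1-a}\,(e_1+e_2)+\tfrac{1}{1-a}\,(a,a+1),
\]
so $v_0$ is not a vertex.

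Alternatively, and this is what the paper does, apply your area argument to the other diagonal $[v_1,v_3]$ as well. The triangles $\operatorname{conv}\{0,e_1,v_3\}$ and $\operatorname{conv}\{e_1,e_1+e_2,v_3\}$ are empty lattice triangles whose areas sum to $1$, so each has area $\tfrac12$. This gives $|a+1|=1$ and $|a-1|=1$, hence $a=0$ and $P=\square$ with no convexity case analysis at all.
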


\begin{proof}
By Lemma~\ref{lem empty polytope vertices}, \(P\) has either three or
four vertices. Since \(P\) is empty, every edge is primitive and
\(P\) has no interior lattice points. {Thus Pick's formula \cite[Theorem~2.8]{BR15}} gives
$
\operatorname{Area}(P)=\frac{r}{2}-1,
$
where \(r\) is the number of vertices of \(P\).

{
If \(r=3\), then \(\operatorname{Area}(P)=\frac12\). Hence the normalized
area of \(P\) is one, so \(P\) is a unimodular triangle and therefore
lattice equivalent to \(\Delta_2\).

Suppose that \(r=4\), and write the vertices as
\(v_0,v_1,v_2,v_3\) in cyclic order. Then
\(\operatorname{Area}(P)=1\). Each diagonal divides \(P\) into two
lattice triangles. Since every nondegenerate lattice triangle has area
at least \(\frac12\), all four triangles arising from the two diagonals
have area \(\frac12\) and are therefore unimodular.

After a lattice equivalence, we may assume that
\(v_0=0\), \(v_1=e_1\), and \(v_2=e_2\). Writing \(v_3=(a,b)\), the
unimodularity and cyclic ordering of
\(\operatorname{conv}\{v_0,v_2,v_3\}\) and
\(\operatorname{conv}\{v_0,v_1,v_3\}\) give \(-a=1\) and \(b=1\).
Thus \(v_3=e_2-e_1\), and \(P\) is lattice equivalent to the unit
square.}
\end{proof}

\begin{lemma}\label{lem for terminal dim 3}
Let \(X\) be an affine terminal Gorenstein toric variety. If
$
\dim X\le 3,
$
then $T^2_X=0$ and thus \(X\) is unobstructed.
\end{lemma}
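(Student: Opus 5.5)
We write \(X=X_P\) with \(P\) an empty lattice polytope of dimension \(d=\dim X-1\le 2\); if \(X\) has torus factors we first split them off, since \(X\cong X'\times(\CC^*)^k\) and \(T^2\) of a product with a torus is \(T^2_{X'}\otimes\CC[\ZZ^k]\). So we may assume \(X=X_P\) has no torus factors and \(P\subset N_\RR\) is full-dimensional. The strategy is to prove \(T^2_{X_P}(-r)=0\) for every \(r\in\widetilde M\) using Theorem~\ref{th AvS comparison}. Unobstructedness then follows, because the obstruction class of any small extension lies in \(T^2_{X_P}\otimes J=0\).

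\emph{Step 1.} Fix \(r\in\widetilde M\). If some vertex \(v\) of \(P\) has \(\varphi_r(v)>1\), then \(T^2_{X_P}(-r)=0\) by Theorem~\ref{th AvS comparison}(1). We may therefore assume \(\varphi_r\le 1\) on \(P\).

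\emph{Step 2.} In that case \(F_r=P\cap(\varphi_r=1)\) is a face of \(P\) (possibly empty, possibly all of \(P\) when \(r=R^*\)). Since \(\dim P=d\le 2\), we get \(\dim F_r\le 2\), and Theorem~\ref{th AvS comparison}(3) gives \(T^2_{X_P}(-r)=0\).

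Combining Steps 1 and 2, every homogeneous component of \(T^2_{X_P}\) vanishes, so \(T^2_{X_P}=0\).

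For low dimensions one can alternatively argue directly. For \(d\le1\), \(P\) is a point or a primitive segment, so \(X\) is smooth or the \(A_1\) surface singularity (a hypersurface), and \(T^2=0\) in either case. For \(d=2\), Lemma~\ref{lem empty lattice polygons} shows \(X\) is either \(\CC^3\) or the cone over the square, \(\{xy=zw\}\), which is again a hypersurface with \(T^2=0\). This confirms the conclusion without appealing to part (3).

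The main point requiring care is the applicability of Theorem~\ref{th AvS comparison}(3) when \(F_r\) is empty or \(r\) is degenerate. Here the complex \eqref{eq t2 complex} has all \(\operatorname{span}K\) terms at most \(\widetilde M_\CC\) or zero, so the vanishing is still covered. The direct hypersurface argument provides a safe fallback that avoids this issue entirely.
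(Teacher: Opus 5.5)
Your main argument is correct, but it takes a different route from the paper. The paper does not invoke Theorem~\ref{th AvS comparison} for this lemma. It identifies $X$ explicitly:
\begin{itemize}
\item an empty segment is primitive, so $X\cong\mathbb A^1$ or $\mathbb A^2$;
\item by Lemma~\ref{lem empty lattice polygons}, an empty polygon is either a unimodular triangle, giving $X\cong\mathbb A^3$, or the unit square, giving the ordinary double point $x_1x_4=x_2x_3$;
\item smooth varieties and hypersurfaces have $T^2=0$.
\end{itemize}
You instead import the Altmann--van Straten vanishing theorem and argue degree by degree. Since $\dim F_r\le\dim P\le 2$ for every $r$, part~(3) as stated already kills every graded piece; this includes $r=R^*$, where $F_r=P$. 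Your Step~1 is therefore not even needed.

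The concern in your last paragraph is also moot. Part~(3) covers $\dim F_r\le 2$ without exception. Moreover, if $F_r=\emptyset$ and $\varphi_r\le 1$, integrality forces $\varphi_r\le 0$ at all vertices. Then every $K^r_j$ is empty and the complex vanishes outright.

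What each route buys: the paper's argument is elementary and self-contained, and it identifies the singularities concretely. Yours is shorter given the cited theorem, and it is exactly the mechanism the paper later uses in Proposition~\ref{th gen}. It also shows why dimension four is different: there $F_{R^*}=P$ is three-dimensional, so part~(3) no longer applies in degree $-R^*$.

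Your fallback argument contains one factual slip. A primitive segment $P=[0,1]$ does not give the $A_1$ surface singularity. The cone generated by $(0,1)$ and $(1,1)$ is unimodular, so $X\cong\mathbb A^2$. The $A_1$ singularity comes from a segment of lattice length $2$, which has an interior lattice point and is therefore not empty. This does not affect your conclusion, since $T^2=0$ in either case.
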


\begin{proof}
If \(\dim X\leq2\), then \(X\cong \mathbb A^1\) or \(X\cong \mathbb A^2\). Hence $X$ is smooth
and \(T_X^2=0\). 
{Suppose now that \(\dim X=3\). Then the associated Gorenstein polytope
\(P\) is two-dimensional and, by terminality, empty.}
By Definition~\ref{def emp} and Lemma~\ref{lem empty lattice polygons}, \(P\) is lattice equivalent either to the standard triangle \(\Delta_2\)
or to the unit square \(\square\).
{If \(P\) is lattice equivalent to \(\Delta_2\), then the primitive
generators of the cone over \(P\) form a lattice basis. Hence the cone
is unimodular and
$
X\cong\mathbb A^3; 
$
thus $T_X^2=0$.
If \(P\) is lattice equivalent to the unit square, then the Hilbert
basis of \(\sigma^\vee\cap\widetilde M\) consists of four elements
\(u_1,u_2,u_3,u_4\), with the unique primitive relation
\(u_1+u_4=u_2+u_3\). Consequently,
\[
X\cong
\operatorname{Spec}
\frac{\CC[x_1,x_2,x_3,x_4]}
     {(x_1x_4-x_2x_3)},
\]
which is the three-dimensional ordinary double point. In particular,
\(X\) is a hypersurface and \(T_X^2=0\). So
\(X\) is unobstructed.}
\end{proof}

If \(X\) is a four-dimensional affine terminal Gorenstein toric
variety, then the obstruction space \(T^2_X\) need not vanish, as the
following example shows. Thus, unlike in dimensions at most three,
unobstructedness cannot be deduced simply from the vanishing of
\(T^2_X\); one has to show that the actual obstruction equations
vanish.

\begin{example}\label{ex eight vertex T2}
Let \(P\) be the empty three-dimensional lattice polytope from
Example~\ref{ex eight vertex polytope hilbert basis}, and let \(X_P\)
be its associated four-dimensional affine toric variety. Retain the
notation \(\mathcal E(P)\) for its fourteen edges and
\(F_{y,0},F_{y,1},F_{z,0},F_{z,1}\) for its four square facets. The
remaining four facets are triangles. We compute
\(T^2_{X_P}(-R^*)\).

Recall that \(a^i=(v^i,1)\). Since
\(\langle R^*,a^i\rangle=1\) for every vertex \(v^i\), the complex
computing \(T^2_{X_P}(-R^*)\) is
\begin{equation}\label{eq eight vertex T2 complex}
\bigoplus_{i=1}^{8}\bigl((a^i)^\perp\bigr)^*
\xrightarrow{g_1}
\bigoplus_{E=[v^i,v^j]\in\mathcal E(P)}
\bigl((a^i,a^j)^\perp\bigr)^*
\xrightarrow{g_2}
\bigoplus_{F\in\mathcal F_2(P)}
\bigl(\widehat F^\perp\bigr)^*.
\end{equation}
Each vertex summand has dimension three, each edge summand has dimension
two, and each facet summand has dimension one. Thus the dimensions of
the three terms are \(24\), \(28\), and \(8\), respectively.

We first determine \(V(R^*)\). 
Recall that the closure relation for a two-dimensional face \(F\) is equal to
$
\sum_{E\subset F}\delta_E^F t_Ed_E=0.
$
It expresses the condition that the scaled oriented edge vectors
\(t_Ed_E\) form a closed polygon. For a triangular facet, it forces all
three edge parameters to be equal, whereas for a square facet it
identifies the parameters on opposite edges.
For a triangular facet, the closure
relation forces its three edge parameters to be equal. The four
triangular facets therefore give
\(t_{13}=t_{15}=t_{35}=t_{37}=t_{57}\) and
\(t_{24}=t_{26}=t_{28}=t_{48}=t_{68}\).

For each square facet, the closure relation identifies the parameters
on opposite edges:
\[
\begin{aligned}
F_{y,0}:&\quad t_{12}=t_{56},\qquad t_{15}=t_{26},
\\
F_{z,0}:&\quad t_{12}=t_{34},\qquad t_{13}=t_{24},
\\
F_{y,1}:&\quad t_{34}=t_{78},\qquad t_{37}=t_{48},
\\
F_{z,1}:&\quad t_{56}=t_{78},\qquad t_{57}=t_{68}.
\end{aligned}
\]
Combining these relations, we obtain
\[
V(R^*)
=
\left\{
(t_E)_{E\in\mathcal E(P)}
\ \middle|\
\begin{array}{l}
t_{12}=t_{34}=t_{56}=t_{78}=\beta,\\
t_E=\alpha\text{ for every other edge }E
\end{array}
\right\}.
\]
Thus \(\dim_{\CC}V(R^*)_{\CC}=2\). The homothety direction
\(\mathbf 1=(1)_{E\in\mathcal E(P)}\) is given by
\(\alpha=\beta\). Consequently, Theorem~\ref{th Altmann T1} gives
\(T^1_{X_P}(-R^*)\cong V(R^*)_{\CC}/\CC\mathbf 1\cong\CC\).
The nontrivial class is represented by the Minkowski decomposition
\(P=[0,e_1]+\operatorname{conv}\{0,e_2,e_3,e_1+e_2+e_3\}\).

Since \(P\) is full-dimensional, the restriction map from global
functionals to the direct sum of the vertex summands is injective and
has four-dimensional image. Moreover,
\(T^1_{X_P}(-R^*)\cong
\ker(g_1)/\operatorname{im}(\widetilde M_{\CC}^*)\).
It follows that \(\dim_{\CC}\ker(g_1)=4+1=5\) and
\(\operatorname{rank}(g_1)=24-5=19\).

The map \(g_2\) is surjective. Indeed, suppose that an element of the
dual of the last term lies in \(\ker(g_2^*)\). If two facets \(F\) and
\(F'\) meet along an edge \(E\), then \(\widehat F^\perp\) and
\(\widehat{F'}^\perp\) are distinct one-dimensional subspaces of the
two-dimensional space \(\widehat E^\perp\). The \(E\)-component of its
image under \(g_2^*\) can therefore vanish only if its coefficients at
both \(F\) and \(F'\) are zero. Hence \(g_2^*\) is injective, so \(g_2\)
is surjective. Consequently, \(\operatorname{rank}(g_2)=8\) and
\(\dim_{\CC}\ker(g_2)=28-8=20\).

Finally,
$$\dim_{\CC}T^2_{X_P}(-R^*)=
\dim_{\CC}\ker(g_2)-\operatorname{rank}(g_1)=20-19=1.$$
Therefore \(T^2_{X_P}(-R^*)\cong\CC\).
\end{example}

\subsection{The tangent space}

{
\begin{proposition}
\label{prop empty T1 degrees}
Let \(P\subset N_{\RR}\) be an empty three-dimensional lattice
polytope, and let \(X_P\) be the associated affine Gorenstein toric
variety. For \(r\in\widetilde M\), set
\[
\varphi_r(v):=\langle r,(v,1)\rangle,
\qquad
F_r:=P\cap\{\varphi_r=1\}.
\]
For every facet \(F\subset P\), let \(s_F\in\widetilde M\) denote the
primitive generator of the corresponding ray of \(\sigma^\vee\), and
let \(\mathcal S(P)\) denote the set of square facets of \(P\).

Apart from the Gorenstein-degree component \(T^1_{X_P}(-R^*)\), the
nonzero homogeneous components of \(T^1_{X_P}\) are precisely those
associated with square facets. More explicitly, for \(r\neq R^*\),
\[
T^1_{X_P}(-r)\neq0
\quad\Longleftrightarrow\quad
r=R^*-ns_F
\]
for some \(F\in\mathcal S(P)\) and \(n\geq1\). In this case,
\(F_r=F\) and
\[
T^1_{X_P}\bigl(-(R^*-ns_F)\bigr)\cong\CC.
\]

Moreover, \(F_{R^*}=P\), and every \(r\) for which
\(T^1_{X_P}(-r)\neq0\) satisfies \(\varphi_r\leq1\) on \(P\).
Consequently,
\[
T^1_{X_P}
=
T^1_{X_P}(-R^*)
\oplus
\bigoplus_{F\in\mathcal S(P)}
\bigoplus_{n\geq1}
T^1_{X_P}\bigl(-(R^*-ns_F)\bigr).
\]
\end{proposition}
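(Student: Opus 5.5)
Throughout, $P$ is empty, so every edge of $P$ is primitive, i.e.\ $\ell(E)=1$ for every edge $E$; every $2$-face is an empty polygon, hence a unimodular triangle or a unit square by Lemma~\ref{lem empty lattice polygons}.

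\emph{Reduction to $\varphi_r\le 1$.} By Theorem~\ref{th AvS comparison}(1), if $\varphi_r(v)>1$ at some vertex then $T^1_{X_P}(-r)=0$. This proves the claim that every nonzero component has $\varphi_r\le1$ on $P$. From now on we assume $\varphi_r\le 1$ and use part (1) of Theorem~\ref{th Altmann T1}: $T^1_{X_P}(-r)\cong V(r)_\CC/\CC\mathbf 1$.

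\emph{Cross-section and edge spaces.} The compact edges of $Q(r)$ are exactly the edges $E$ of $P$ with both endpoints at level $\varphi_r=1$, i.e.\ the edges of $F_r$. The compact $2$-faces of $Q(r)$ are the $2$-faces of $F_r$. Since $\varphi_r$ is affine and integral, $\le1$ on $P$, and $\varphi_r(v)\in\ZZ$, the set $F_r$ is a face of $P$ (possibly empty). Hence $V(r)$ is the space of edge weights on the face $F_r$ satisfying the closure relations on its $2$-faces, and $T^1(-r)=V(r)/\CC\mathbf 1$.

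We split by $\dim F_r$:
\begin{itemize}
\item If $F_r$ is empty or a vertex, then $V(r)=0$ and the quotient is $0$. (When there are no compact edges, $\mathbf 1$ is the zero vector.)
\item If $F_r$ is an edge, then $V(r)=\CC$ is spanned by $\mathbf 1$, so $T^1=0$.
\item If $F_r$ is a triangle, the closure relation forces all three weights equal, so $T^1=0$.
\item If $F_r$ is a square, opposite edges have equal weights, so $V(r)\cong\CC^2$ and $T^1\cong\CC$.
\item If $\dim F_r=3$, then $F_r=P$ and $\varphi_r\equiv1$ on $P$. Since $P$ is full-dimensional, $\pi_M(r)=0$ and $\pi_\ZZ(r)=1$, so $r=R^*$; in particular $F_{R^*}=P$.
\end{itemize}
Thus for $r\ne R^*$, the component $T^1_{X_P}(-r)$ is nonzero precisely when $\varphi_r\le1$ and $F_r$ is a square facet $F$, and then it is $\CC$.

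\emph{Identifying the degrees.} Suppose $F_r=F$ is a square facet and $\varphi_r\le 1$. Then the affine function $1-\varphi_r=\varphi_{R^*-r}$ is integral, $\ge0$ on $P$, and vanishes exactly on $F$. Hence $R^*-r\in\sigma^\vee$ lies on the ray of $\sigma^\vee$ dual to the facet $\widehat F$, so $R^*-r=ns_F$ for some $n\ge 0$. Here $n\ne0$, since $F_r\ne P$. Conversely, for $r=R^*-ns_F$ with $n\ge1$, we have $\varphi_r=1-n\varphi_{s_F}\le1$. Because $s_F$ is inward, vanishes exactly on $F$, and is positive on the other vertices, we get $F_r=F$. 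The direct sum decomposition then follows by collecting components.

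The only delicate step is checking that Altmann's $Q(r)$-description really reduces to edges and faces of $F_r$, in particular that compact faces of $Q(r)$ correspond to faces of $P$ at level $1$ and not to other level sets. This needs $\varphi_r\le1$, and we handle it by noting that $Q(r)$ meets the ray through $a^i$ at $a^i/\varphi_r(v^i)$ when $\varphi_r(v^i)>0$, and does not meet it otherwise. So $Q(r)$ is unbounded unless all vertices have $\varphi_r>0$, and its compact faces are spanned by vertices with $\varphi_r(v^i)>0$. We will need to verify that compact edges among vertices with $0<\varphi_r<1$ cannot occur. For integral $\varphi_r\le1$ this is automatic, since $\varphi_r(v^i)\in\ZZ$ and positivity forces $\varphi_r(v^i)=1$.
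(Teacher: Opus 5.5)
Your argument is correct and follows essentially the paper's proof. Theorem~\ref{th AvS comparison}(1) forces \(\varphi_r\le1\); Theorem~\ref{th Altmann T1}(1) gives \(T^1_{X_P}(-r)\cong\CC\) when \(F_r\) is a square facet; and the same ray argument shows \(R^*-r=ns_F\) with \(n\geq1\). The only difference is how the remaining cases are handled: the paper disposes of \(F_r\) empty, a vertex, an edge, or a triangle via the simplex criterion of Theorem~\ref{th AvS comparison}(2), whereas you compute \(V(r)_{\CC}/\CC\mathbf 1\) directly, which also makes explicit (via integrality of \(\varphi_r\)) the identification of compact faces of \(Q(r)\) with faces of \(F_r\) that the paper uses tacitly.
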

}

\begin{proof}
Since \(P\) is empty, every two-dimensional face of \(P\) is lattice
equivalent either to the standard triangle or to the unit square, by
Lemma~\ref{lem empty lattice polygons}. Theorem~\ref{th AvS comparison}
therefore shows that \(T^1_{X_P}(-r)\neq0\) only if
\(\varphi_r\leq1\) on \(P\), and that in this case either \(F_r=P\) or
\(F_r\) is a square facet.

If \(F_r=P\), then \(\varphi_r=1\) on the full-dimensional polytope
\(P\). Hence \(\varphi_r=\varphi_{R^*}\), and therefore \(r=R^*\).
Conversely, \(\varphi_{R^*}=1\) on \(P\), so \(F_{R^*}=P\).

Suppose now that \(r\neq R^*\) and \(T^1_{X_P}(-r)\neq0\). Then
\(F:=F_r\) is a square facet. Since both \(\varphi_r\) and
\(\varphi_{R^*}\) equal \(1\) on \(F\), the element \(R^*-r\) vanishes
on \(F\). Moreover, \(\varphi_r\leq1\) on \(P\), so \(R^*-r\) is
nonnegative on \(P\). It therefore belongs to the ray of
\(\sigma^\vee\) corresponding to \(F\). Since \(s_F\) is the primitive
lattice generator of this ray, \(R^*-r=ns_F\) for some \(n\geq1\), or
equivalently \(r=R^*-ns_F\).

Conversely, let \(F\) be a square facet and set \(r=R^*-ns_F\) for
some \(n\geq1\). Since \(\varphi_{s_F}\geq0\) on \(P\) and vanishes
precisely on \(F\), we have
\(\varphi_r=1-n\varphi_{s_F}\leq1\) on \(P\), with equality precisely
on \(F\). Thus \(F_r=F\).

For a unit square, \(V(r)_{\CC}\cong\CC^2\), with one coordinate for
each pair of opposite edges. Theorem~\ref{th Altmann T1} therefore gives
\[
T^1_{X_P}(-r)
\cong
V(r)_{\CC}/\CC\mathbf 1
\cong
\CC.
\]
This proves the classification of the nonzero homogeneous components,
and the asserted direct-sum decomposition follows.
\end{proof}

{
The following example applies
Proposition~\ref{prop empty T1 degrees} to the eight-vertex polytope
and records all nonzero homogeneous components of its tangent space.

\begin{example}\label{ex eight vertex T1 degrees}
Continue with the polytope \(P\) of
Example~\ref{ex eight vertex polytope hilbert basis}. Retain the
notation \(F_{y,0},F_{y,1},F_{z,0},F_{z,1}\) for its four square
facets. Their primitive support elements are
\(s_{F_{y,0}}=s_2\), \(s_{F_{y,1}}=s_7\),
\(s_{F_{z,0}}=s_3\), and \(s_{F_{z,1}}=s_8\).

By Proposition~\ref{prop empty T1 degrees}, for every \(n\geq1\) and
\(i\in\{2,3,7,8\}\), we have
\(T^1_{X_P}(-(R^*-ns_i))\cong\CC\). Moreover,
Example~\ref{ex eight vertex T2} showed that
\(T^1_{X_P}(-R^*)\cong\CC\). Hence
\[
T^1_{X_P}
=
T^1_{X_P}(-R^*)
\oplus
\bigoplus_{i\in\{2,3,7,8\}}
\bigoplus_{n\geq1}
T^1_{X_P}\bigl(-(R^*-ns_i)\bigr),
\]
and every homogeneous component appearing in this decomposition is
one-dimensional.
\end{example}
}

\subsection{The obstruction space}

\begin{proposition}\label{th gen}
Let \(P\subset N_{\RR}\), where \(N\simeq\ZZ^3\), be an empty
three-dimensional lattice polytope, and let \(X_P\) be the associated
four-dimensional affine Gorenstein toric variety. Then
$
T^2_{X_P}(-r)=0
$
for every \(r\in\widetilde M\) with \(r\neq R^*\).
\end{proposition}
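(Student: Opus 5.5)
Fix \(r\in\widetilde M\) with \(r\neq R^*\). The plan is a case analysis on the position of the level set \(F_r=P\cap\{\varphi_r=1\}\), and in almost all cases we simply apply Theorem~\ref{th AvS comparison}.

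\emph{Case 1.} Some vertex \(v\) of \(P\) has \(\varphi_r(v)>1\). Then Theorem~\ref{th AvS comparison}(1) gives \(T^2_{X_P}(-r)=0\) directly.

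From now on assume \(\varphi_r\leq1\) on \(P\). Then \(F_r\) is the face of \(P\) on which the affine function \(\varphi_r\) attains the value \(1\), so \(F_r\) is either empty or a face of \(P\).

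\emph{Case 2.} \(\dim F_r\leq2\). Then Theorem~\ref{th AvS comparison}(3) gives \(T^2_{X_P}(-r)=0\). This covers \(F_r=\emptyset\), vertices, edges and facets. For \(F_r=\emptyset\), if one prefers not to rely on the convention in (3), one checks it directly: all \(\langle a^j,r\rangle\leq0\), so by \eqref{span-possibilities} every term of \eqref{eq t2 complex} vanishes.

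\emph{Case 3.} \(\dim F_r=3\), so \(F_r=P\). Then \(\varphi_r\equiv1\) on the full-dimensional polytope \(P\). An affine function on \(N_\RR\) is determined by its values on a full-dimensional set, so \(\varphi_r=\varphi_{R^*}\). Since the vectors \((v,1)\), \(v\in P\), span \(\widetilde N_\RR\), this forces \(r=R^*\), which is excluded.

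The only point needing care is checking that the hypotheses of Theorem~\ref{th AvS comparison} are used exactly as stated. Part (3) already covers every face of dimension at most two, so the simplex hypothesis of part (2) is not needed. No further obstacle is expected. In particular, the emptiness of \(P\) enters only through Theorem~\ref{th AvS comparison}; the fact that every \(2\)-face is a unimodular triangle or a unit square (Lemma~\ref{lem empty lattice polygons}) is not required here.
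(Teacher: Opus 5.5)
Your proposal is correct and follows the paper's proof. Theorem~\ref{th AvS comparison}(1) handles the case where some vertex has $\varphi_r(v)>1$; otherwise $r\neq R^*$ forces $F_r$ to be a proper (possibly empty) face of dimension at most two, so Theorem~\ref{th AvS comparison}(3) applies. Your direct check of $F_r=\emptyset$ via \eqref{span-possibilities}, and your explicit argument that $F_r=P$ forces $r=R^*$, spell out points the paper leaves implicit.
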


\begin{proof}
If there exists a vertex \(v\) of \(P\) such that
\(\varphi_r(v)>1\), then the assertion follows from
Theorem~\ref{th AvS comparison}(1). Suppose therefore that
\(\varphi_r\leq1\) on \(P\). Since \(r\neq R^*\), the section
$
F_r=P\cap(\varphi_r=1)
$
is a proper face of \(P\), and hence \(\dim F_r\leq2\). The assertion
now follows from Theorem~\ref{th AvS comparison}(3).
\end{proof}

\section{Unobstructedness of affine terminal Gorenstein toric fourfolds}
\label{sec 5 unobstructedness}

In this section, we prove that every affine terminal Gorenstein toric
fourfold is unobstructed.

{
Throughout this section, let \(P\subset N_{\RR}\) be an empty
three-dimensional lattice polytope and let \(X_P\) be the associated
affine toric fourfold. We denote by \(\mathcal S(P)\) the set of square
facets of \(P\). For \(F\in\mathcal S(P)\), let
\(s_F\in\widetilde M\) be the primitive generator of the ray of
\(\sigma^\vee\) corresponding to \(F\), and set
\(m_F:=R^*-s_F\).}

By Proposition~\ref{th gen}, the only homogeneous component of
\(T^2_{X_P}\) which can be nonzero is \(T^2_{X_P}(-R^*)\).
On the other hand, Proposition~\ref{prop empty T1 degrees} gives
\(T^1_{X_P}(-m_F)\cong\CC\) for every \(F\in\mathcal S(P)\).
Choose a tangent parameter \(t_F\) corresponding to this component and
assign it degree \(\deg(t_F)=m_F\).

A monomial
\[
\prod_{F\in\mathcal S(P)}t_F^{n_F},
\qquad n_F\in\NN,
\]
can contribute to a nonzero obstruction only if its degree is \(R^*\),
that is, only if
\[
\sum_{F\in\mathcal S(P)}n_Fm_F=R^*.
\]
The following lemma classifies all such relations. It shows that the
only possibility involves two parallel square facets, each occurring
with coefficient one.

{
\begin{lemma}\label{lem square relation}
With the notation above, suppose that
\[
\sum_{F\in\mathcal S(P)}n_F(R^*-s_F)=R^*
\]
for some \(n_F\in\NN\). Then there exist precisely two square facets
\(F_1,F_2\in\mathcal S(P)\) such that
\(n_{F_1}=n_{F_2}=1\), while \(n_F=0\) for every
\(F\notin\{F_1,F_2\}\). Moreover, \(F_1\) and \(F_2\) are parallel.
\end{lemma}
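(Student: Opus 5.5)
Evaluate the relation on the lifted vertices $a=(v,1)$, $v\in\operatorname{Vert}(P)$. Since $\langle R^*,(v,1)\rangle=1$ and $\langle s_F,(v,1)\rangle\ge0$, the relation reads
\[
\Bigl(\sum_F n_F\Bigr)-1=\sum_F n_F\,\varphi_{s_F}(v)\qquad\text{for every vertex }v .
\]
Write $n:=\sum_F n_F$. The left-hand side does not depend on $v$.

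First take $v$ to be a vertex of some facet $F_0$ with $n_{F_0}>0$. Then the term $\varphi_{s_{F_0}}(v)$ vanishes. The other terms are nonnegative integers, and $\varphi_{s_F}(v)\ge1$ whenever $v\notin F$. Hence the common value $n-1$ is at most the sum of $n_F$ over those $F$ with $n_F>0$ not containing $v$, that is,
\[
n-1\le n-\sum_{F\ni v}n_F .
\]
It follows that no vertex lies in facets of total weight $\ge 2$. In particular, every vertex lies on at most one facet of the support $\{F: n_F>0\}$, and $n_F\le1$ whenever $F$ has a vertex (which is always). So the support consists of pairwise vertex-disjoint square facets, each with coefficient one.

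Next, count vertices. Each square has $4$ vertices. The squares are disjoint and $\#\operatorname{Vert}(P)\le 8$ by Lemma \ref{lem empty polytope vertices}. Hence at most two squares occur. The relation excludes $n=0$, since $0\neq R^*$. If $n=1$, the relation becomes $R^*-s_F=R^*$, forcing $s_F=0$, which is impossible. So exactly two squares $F_1,F_2$ occur, with $n_{F_1}=n_{F_2}=1$ and $s_{F_1}+s_{F_2}=R^*$.

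Finally, parallelism. The identity $\pi_M(s_{F_1})=-\pi_M(s_{F_2})$ says the two facet normals are opposite, so the affine hyperplanes spanned by $F_1$ and $F_2$ are parallel. In fact $\varphi_{s_{F_1}}+\varphi_{s_{F_2}}\equiv1$. Hence $F_1=\{\varphi_{s_{F_1}}=0\}\cap P$ and $F_2=\{\varphi_{s_{F_1}}=1\}\cap P$, so $P$ lies between two parallel lattice planes at lattice distance one. This is the meaning of \emph{parallel} intended here.

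The main obstacle is the vertex-evaluation inequality of the first step: it must be set up carefully so that overlapping facets are ruled out. The key point is that evaluating at a vertex of a facet in the support gives an upper bound, while the lattice inequality $\varphi_{s_F}(v)\ge1$ off $F$ gives a lower bound. An alternative route for this step is to sum the relation over all vertices, but the pointwise argument is cleaner.
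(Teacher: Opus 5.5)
\textbf{There is a genuine gap in your first step: the inequality runs the wrong way.} At a vertex $v$ your identity reads
\[
n-1=\sum_{F\not\ni v}n_F\,\varphi_{s_F}(v).
\]
The lattice bound $\varphi_{s_F}(v)\ge1$ for $v\notin F$ only gives
\[
n-1\ge\sum_{F\not\ni v}n_F=n-\sum_{F\ni v}n_F,
\]
that is, $\sum_{F\ni v}n_F\ge1$. This says every vertex is covered by the support \emph{at least} once. Choosing $v$ on a facet of the support supplies no upper bound.

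Disjointness and $n_F\le1$ need the reverse inequality $\varphi_{s_F}(v)\le1$ for vertices $v\notin F$. This is exactly where emptiness of $P$ has to enter. Your argument uses emptiness only through $\#\operatorname{Vert}(P)\le8$, which signals that something is missing. Without a height bound, a vertex at lattice distance $\ge2$ from a square facet in the support could absorb the excess weight. Nothing you have written rules out overlapping facets or $n_F\ge2$.

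The missing ingredient is the height-one fact the paper proves first: if $F$ is a square facet of an empty $P$, then $\langle s_F,(v,1)\rangle=1$ for every vertex $v\notin F$.

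To prove it, normalise so that $F=\operatorname{conv}\{0,e_1,e_2,e_1+e_2\}$ and $\varphi_{s_F}=z$. Suppose some vertex $v=(a,b,h)$ had $h\ge2$. Then the slice
\[
\operatorname{conv}(F,v)\cap\{z=1\}=\bigl(\tfrac ah,\tfrac bh,1\bigr)+\bigl(1-\tfrac1h\bigr)\bigl([0,1]^2\times\{0\}\bigr)
\]
contains the lattice point $\bigl(\lceil a/h\rceil,\lceil b/h\rceil,1\bigr)$. This point is a nontrivial convex combination of $v$ and a point of $F$. Hence it is a non-vertex lattice point of $P$, contradicting emptiness.

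Consequently $\varphi_{R^*-s_F}$ equals $1$ on the vertices of $F$ and $0$ at all other vertices. Pairing the relation with $(v,1)$ then gives $\sum_{F\ni v}n_F=1$ for every vertex $v$. From there, your remaining steps go through unchanged: the counting argument (at most two vertex-disjoint squares among at most eight vertices, and $n\neq0,1$) and the parallelism argument via $\pi_M(s_{F_1})=-\pi_M(s_{F_2})$.
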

}

\begin{proof}
We first show that, for every square facet \(F\) and every vertex
\(v\in P\setminus F\),
\begin{equation}\label{eq square height one}
\langle s_F,(v,1)\rangle=1.
\end{equation}

After an affine lattice change of coordinates, we may assume that
\begin{equation}
\label{height-1-section}
F
=
\operatorname{conv}
\{(0,0,0),(1,0,0),(0,1,0),(1,1,0)\}
\end{equation}
and that
$
\langle s_F,(x,y,z,1)\rangle=z.
$
Let
$
v=(a,b,h)
$
be a vertex of \(P\) not contained in \(F\). Then \(h\geq1\). Suppose
that \(h\geq2\). The section of the pyramid
\(\operatorname{conv}(F,v)\) at height \(1\) is
\[
\operatorname{conv}(F,v)\cap\{z=1\}
=
\left(
\frac{a}{h},
\frac{b}{h},
1
\right)
+
\left(1-\frac1h\right)
([0,1]^2\times\{0\}).
\]
Indeed, every point of \(\operatorname{conv}(F,v)\) can be written as
\((1-\lambda)w+\lambda v\), where \(w=(x,y,0)\in F\) and
\(0\leq\lambda\leq1\). Its \(z\)-coordinate is \(\lambda h\), so a
point lies in the section \(z=1\) precisely when
\(\lambda=1/h\). Consequently, the points of this section are exactly
\[
\left(
\frac{a}{h}+\left(1-\frac1h\right)x,\,
\frac{b}{h}+\left(1-\frac1h\right)y,\,
1
\right),
\qquad (x,y)\in[0,1]^2,
\]
which verifies \eqref{height-1-section}.

We next verify that this section contains a lattice point. Write
\(a=dh+r\), where \(0\leq r<h\). If \(r=0\), then
\(\lceil a/h\rceil=a/h\). If \(r>0\), then
\(\lceil a/h\rceil-a/h=(h-r)/h\leq(h-1)/h=1-1/h\). Thus
\(\lceil a/h\rceil\in[a/h,a/h+1-1/h]\), and the same argument applies
to \(b\). Hence
\[
q:=
\left(
\left\lceil\frac ah\right\rceil,\,
\left\lceil\frac bh\right\rceil,\,
1
\right)
\]
belongs to \(\operatorname{conv}(F,v)\cap\{z=1\}\) and is a lattice
point.

Since \(0<1<h\), the point \(q\) is neither a vertex of \(F\) nor the
vertex \(v\). Moreover, it is a nontrivial convex combination of \(v\)
and a point of \(F\), so it is not a vertex of \(P\). This contradicts
the assumption that \(P\) is empty. Therefore \(h=1\), proving
\eqref{eq square height one}.

Consequently, for every vertex \(v\) of \(P\),
\[
\langle R^*-s_F,(v,1)\rangle
=
\begin{cases}
1, & v\in F,\\
0, & v\notin F.
\end{cases}
\]

Pairing
\[
\sum_{F\in\mathcal S(P)}
n_F(R^*-s_F)=R^*
\]
with \((v,1)\), for an arbitrary vertex \(v\) of \(P\), gives
\begin{equation}\label{eq vertex covering}
\sum_{\substack{F\in\mathcal S(P)\\ v\in F}}n_F=1.
\end{equation}
Set
$
\mathcal S'
:=
\{F\in\mathcal S(P)\mid n_F>0\}.
$
Equation~\eqref{eq vertex covering} shows that \(n_F=1\) for every
\(F\in\mathcal S'\), that distinct facets in \(\mathcal S'\) are
vertex-disjoint, and that every vertex of \(P\) belongs to exactly one
such facet. Since every facet in \(\mathcal S'\) has four vertices and
\(P\) has at most eight vertices by
Lemma~\ref{lem empty polytope vertices}, we have
$
|\mathcal S'|\leq2.
$
Moreover, \(|\mathcal S'|\neq1\), since otherwise all vertices of \(P\)
would lie in a single two-dimensional facet. Hence
\[
\mathcal S'=\{F_1,F_2\},
\qquad
n_{F_1}=n_{F_2}=1,
\]
and all the remaining coefficients vanish.

The original relation becomes
\[
(R^*-s_{F_1})+(R^*-s_{F_2})=R^*,
\]
or equivalently
$
s_{F_1}+s_{F_2}=R^*.
$
Applying the projection
$
\pi_M\colon\widetilde M\longrightarrow M
$
gives
\[
\pi_M(s_{F_1})+\pi_M(s_{F_2})=0.
\]
Thus the primitive normal vectors of \(F_1\) and \(F_2\) are opposite,
and hence \(F_1\) and \(F_2\) are parallel.
\end{proof}

\begin{example}\label{ex eight vertex primitive sums}
Let \(P\) be the polytope of
Example~\ref{ex eight vertex polytope hilbert basis}. For its four
square facets, the corresponding primitive tangent degrees are
\[
\begin{aligned}
m_{y,0}&=R^*-s_{F_{y,0}}=R^*-s_2=s_7,
&\hspace{3em}
m_{y,1}&=R^*-s_{F_{y,1}}=R^*-s_7=s_2,\\
m_{z,0}&=R^*-s_{F_{z,0}}=R^*-s_3=s_8,
&\hspace{3em}
m_{z,1}&=R^*-s_{F_{z,1}}=R^*-s_8=s_3.
\end{aligned}
\]
Consequently, the two pairs of opposite square facets give the
relations
\[
m_{y,0}+m_{y,1}
=
s_7+s_2
=
R^*
\qquad
\text{and}
\qquad
m_{z,0}+m_{z,1}
=
s_8+s_3
=
R^*.
\]
In Example~\ref{ex eight vertex two parameter deformations}, we
constructed a two-parameter deformation associated with each of these
two relations. Thus this polytope admits two distinct two-parameter
deformation families arising from its two pairs of opposite square
facets.
\end{example}

\begin{theorem}
\label{th term un}
\label{cor terminal Gorenstein unobstructed}
Let \(X_P\) be an affine terminal Gorenstein toric variety with
$
\dim X_P\leq4.
$
Then \(X_P\) is unobstructed.
\end{theorem}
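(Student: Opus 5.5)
For $\dim X_P\le 3$ I would simply invoke Lemma~\ref{lem for terminal dim 3}. So assume $\dim X_P=4$, i.e.\ $P$ is an empty three-dimensional lattice polytope. The plan is to show directly that $\Def_{X_P}$ is formally smooth. I will work with graded deformations and an induction over small extensions. By Proposition~\ref{th gen}, every obstruction class has its only possibly nonzero homogeneous component in $T^2_{X_P}(-R^*)$. By Proposition~\ref{prop empty T1 degrees}, the tangent directions have degrees $R^*$ and $R^*-ns_F$ with $F\in\mathcal S(P)$ and $n\ge1$.

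\textbf{Step 1 (degree bookkeeping).} Take an arbitrary graded deformation over an Artinian base. Its obstruction to lifting along a small extension is a polynomial in the tangent parameters, and it is $\widetilde M$-homogeneous. I will compute which monomials in the parameters can have total degree $R^*$.
\begin{itemize}
\item Any monomial involving the parameter of degree $R^*$, other than that parameter alone, has degree $R^*+(\text{nonzero element of } \sigma^\vee\text{-type})\ne R^*$, after checking that the sum of nonzero degrees cannot vanish.
\item A monomial $\prod t_{F,n}^{a_{F,n}}$ in the square-facet parameters of degrees $R^*-ns_F$ has degree
\[
\sum a_{F,n}R^*-\sum a_{F,n}n\,s_F .
\]
Pairing with $(v,1)$ and using that $\langle R^*-ns_F,(v,1)\rangle$ equals $1$ on $F$ and $1-n\le 0$ off $F$ (the computation \eqref{eq square height one}), I expect to reduce to the argument of Lemma~\ref{lem square relation}.
\end{itemize}
The conclusion should be that the only monomial of degree $R^*$ of order at least $2$ is $t_{F_1}t_{F_2}$, with $n=1$ and $F_1,F_2$ parallel square facets satisfying $s_{F_1}+s_{F_2}=R^*$. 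In particular, every other potential obstruction coefficient vanishes for degree reasons.

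\textbf{Step 2 (killing the mixed term).} For each such parallel pair, I need a deformation pair $(m_{F_i},Q_i)$ with $Q_i$ a lattice segment. A unit square is a Minkowski sum of two primitive segments, and the level-one section of $\varphi_{m_{F_i}}$ is exactly $F_i$ by \eqref{eq square height one}. The pair must be chosen so that its one-parameter deformation represents a generator of $T^1_{X_P}(-m_{F_i})$. Here I would check that the Kodaira--Spencer class is nonzero, comparing with Theorem~\ref{th Altmann T1}. Corollary~\ref{cor parallel squares simultaneous deformation} then gives an honest formal deformation over $\CC[[t_{F_1},t_{F_2}]]$. This shows that the coefficient of $t_{F_1}t_{F_2}$ in the quadratic obstruction map vanishes, and more generally that no obstruction arises along this plane.

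\textbf{Step 3 (assembling, and the main obstacle).} Since $T^1_{X_P}$ is infinite-dimensional, I will fix $N$ and build a formal deformation over the power-series ring in all parameters of degree $R^*-ns_F$ with $n\le N$, together with the $R^*$-direction. To do this I will:
\begin{itemize}
\item obtain the $n\ge2$ directions from the $n=1$ families by the homogeneous substitution using the $\CC[S_P]$-module structure, for instance multiplying the parameter degree by $\chi^{-(n-1)s_F}$-shifts in the equations $F_{\bfk}$ and in their relations $R_{\bfa,\bfk}$;
\item glue the families for different facets, which should be possible because their equations modify disjoint graded pieces, except for parallel pairs, which are handled by Step~2.
\end{itemize}
Then, given a deformation over $B$ and a small extension $B'\to B$, I will take its classifying map to the $N$-truncated hull candidate for $N$ large relative to the length of $B'$. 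Lifting along that map shows that $\Def_{X_P}(B')\to\Def_{X_P}(B)$ is surjective.

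The hardest step is this last one. It requires verifying that the substituted families are flat, which amounts to lifting relations under the substitution, and that their combination is versal enough in each bounded degree range to dominate an arbitrary Artinian deformation. If the gluing fails, I would instead argue degree by degree: the obstruction lies in $T^2(-R^*)\otimes J$, and its coefficient is determined by the truncated families of Steps~1--2.
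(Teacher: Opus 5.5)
Your Step~1 is false as stated, and the rest of your plan relies on it. Once the parameters $t_{F,n}$ with $n\ge2$ are admitted, their degrees $R^*-ns_F$ no longer lie in $\sigma^\vee$: by \eqref{eq square height one} they pair to $1-n<0$ with $(v,1)$ for $v\notin F$. Your strong-convexity argument for monomials containing the $R^*$-parameter needs these pairings to be nonnegative, and so does the vertex-covering argument of Lemma~\ref{lem square relation}. Neither reduction goes through. Concretely, if $s_{F_1}+s_{F_2}=R^*$ (e.g.\ $F_{y,0},F_{y,1}$ in Example~\ref{ex eight vertex polytope hilbert basis}), then
\[
\deg\bigl(t_{F_1,k}\,t_{F_2,1}^{k}\bigr)=(k+1)R^*-k\bigl(s_{F_1}+s_{F_2}\bigr)=R^*,
\qquad
\deg\bigl(t_{F_1,2}\,t_{F_2,2}\bigr)=0.
\]
So $t_{F_1,k}t_{F_2,1}^{k}$ and $z\,t_{F_1,2}t_{F_2,2}$ are candidate obstruction monomials in every order $\ge3$, while Step~2 only disposes of $t_{F_1}t_{F_2}$. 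Also, treating the obstruction of an arbitrary Artinian deformation as a polynomial in tangent parameters presupposes a hull, which you do not have, since $T^1_{X_P}$ is infinite-dimensional.

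The repair, which is the paper's route, is to run the obstruction calculus only over the primitive parameters: $z_i$ of degree $R^*$ and $t_F$ of degree $m_F=R^*-s_F\in\sigma^\vee$. There strong convexity and Lemma~\ref{lem square relation} really do leave only $t_{F_1}t_{F_2}$ in order two and nothing in higher orders. The coefficient of $t_{F_1}t_{F_2}$ is then killed by pulling the second-order obstruction back to the family of Corollary~\ref{cor parallel squares simultaneous deformation}. This order-by-order construction of a single family over the power series ring in the $z_i$ and $t_F$ also replaces your ``gluing'', which is unjustified. Adjacent square facets deform the same equations (compare $F^{z}_{15}$ and $F^{y}_{15}$ in Example~\ref{ex eight vertex two parameter deformations}), so the explicit families cannot simply be superposed.

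The directions with $n\ge2$ must then stay outside the obstruction theory. Substitute $t_F\mapsto\sum_{n\le N}x_F^{n-1}t_{F,n}$ into a finite homogeneous presentation of the primitive family \emph{and} into its lifted syzygies, and obtain flatness from relation lifting (Lemma~\ref{lem coordinate dependent substitution}). The new Kodaira--Spencer classes are $x_F^{n-1}\xi_F$, which generate because $K_\tau^{R^*-ns_F}=K_\tau^{R^*-s_F}$.

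Finally, your last step is misstated. $N$ cannot be chosen in terms of the length of $B'$, since already over $\CC[\epsilon]/(\epsilon^2)$ there are deformations in $T^1_{X_P}(-(R^*-ns_F))$ for every $n$. What you need is that each Artinian deformation is pulled back from the $N$-truncated family for \emph{some} $N$ depending on it. This is proved by induction on length:
\begin{enumerate}
\item Lift the classifying map across a small extension with kernel $J$.
\item Note that the two resulting liftings differ by a class $\delta\in T^1_{X_P}\otimes J$ of finite homogeneous support.
\item Enlarge $N$ until $\delta$ lies in the image of the Kodaira--Spencer map.
\item Correct the classifying map by a linear term with Kodaira--Spencer image $\delta$. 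This is legitimate because $J$ is killed by the maximal ideal.
\end{enumerate}
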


\begin{proof}
The assertion for \(\dim X_P\leq3\) follows from
Lemma~\ref{lem for terminal dim 3}. We therefore assume that
$
\dim X_P=4,
$
so that \(P\) is an empty three-dimensional lattice polytope.

{
By Proposition~\ref{prop empty T1 degrees}, for each square facet
\(F\in\mathcal S(P)\), the corresponding nonzero tangent components
occur in degrees \(-(R^*-ns_F)\), with \(n\geq 1\). We call the
component corresponding to \(n=1\) the primitive tangent component
associated with \(F\). 

We first restrict to the finite-dimensional primitive tangent
subspace
\[
V_{\mathrm{prim}}
:=
T^1_{X_P}(-R^*)
\oplus
\bigoplus_{F\in\mathcal S(P)}
T^1_{X_P}\bigl(-(R^*-s_F)\bigr).
\]
Thus \(V_{\mathrm{prim}}\) consists of the Gorenstein-degree component
together with the primitive tangent component associated with each
square facet of \(P\).
}

For every \(F\in\mathcal S(P)\), put
$
m_F:=R^*-s_F.
$
By \eqref{eq square height one},
\[
\langle m_F,(v,1)\rangle
=
\begin{cases}
1, & v\in F,\\
0, & v\notin F.
\end{cases}
\]
Consequently,
$
m_F\in\sigma^\vee\setminus\{0\}.
$

{By Lemma~\ref{lem empty lattice polygons}, the facet \(F\) is lattice
equivalent to the unit square. Hence there exist a vertex \(p\in F\)
and primitive lattice vectors \(e_1,e_2\in N\) such that
\[
F=p+[0,e_1]+[0,e_2].
\]
Since \(e_1\) and \(e_2\) are parallel to \(F\), they are annihilated
by \(\pi_M(s_F)\), and therefore also by
\(\pi_M(m_F)=-\pi_M(s_F)\). Thus
\(Q_F:=[0,e_1]\) is a primitive lattice segment contained in
\(\ker\pi_M(m_F)\), parallel to an edge of \(F\), and a Minkowski
summand of \(F\).}
Then \((m_F,Q_F)\) is a deformation pair.  Let \(t_F\) denote
the parameter of the corresponding one-parameter deformation and set
\[
\xi_F
:=
\KS\left(\frac{\partial}{\partial t_F}\right)
\in
T^1_{X_P}(-m_F).
\]
{
By restricting \eqref{eq two summand Fk} to the
\(t_F\)-axis, its first-order part is
\[
F_{\mathbf k}
=
f_{\mathbf k}
-
\eta_{Q_F}(\mathbf k)t_F
\chi^{s_{\mathbf k}-m_F}
+
O(t_F^2).
\]
Hence \(\xi_F\) is the Minkowski-summand class determined by \(Q_F\).
Under the identification of Theorem~\ref{th Altmann T1}, this class is
nonzero because \(Q_F\) is a nontrivial Minkowski summand of the unit
square \(F\). Since \(T^1_{X_P}(-m_F)\) is one-dimensional,
\(\xi_F\) is a generator.}


Let
$
d:=\dim_{\CC}T^1_{X_P}(-R^*)
$
and choose a basis
$
\zeta_1,\ldots,\zeta_d
$
of \(T^1_{X_P}(-R^*)\). Introduce variables
$
z_1,\ldots,z_d
$
of degree \(R^*\), and put
\[
R_{\mathrm{prim}}
:=
\CC\Bigl[\!\Bigl[
z_1,\ldots,z_d,
(t_F)_{F\in\mathcal S(P)}
\Bigr]\!\Bigr],
\]
where
\[
\deg(z_i)=R^*,
\qquad
\deg(t_F)=m_F=R^*-s_F.
\]
Let
\[
\mathfrak m_{\mathrm{prim}}
:=
\bigl(z_1,\ldots,z_d,(t_F)_{F\in\mathcal S(P)}\bigr)
\]
and, for \(k\geq1\), set
\[
R_{\mathrm{prim}}^{(k)}
:=
R_{\mathrm{prim}}/
\mathfrak m_{\mathrm{prim}}^{k+1}.
\]

Since \(T^1_{X_P}\) may be infinite-dimensional, no complete local
noetherian base can carry independent parameters for all its tangent
directions. We therefore begin with the finite-dimensional subspace
\(V_{\mathrm{prim}}\) and construct a formal deformation in all orders.

More precisely, for each \(k\geq1\), we construct inductively a graded
\(k\)-th order deformation
\[
\mathcal X_{\mathrm{prim}}^{(k)}
\longrightarrow
\operatorname{Spec}R_{\mathrm{prim}}^{(k)}
\]
of \(X_P\). These deformations are compatible in the sense that the
pullback of \(\mathcal X_{\mathrm{prim}}^{(k+1)}\) to
\(\operatorname{Spec}R_{\mathrm{prim}}^{(k)}\) is isomorphic to
\(\mathcal X_{\mathrm{prim}}^{(k)}\). Their Kodaira--Spencer maps
identify the coordinate tangent vectors with
\[
\zeta_1,\ldots,\zeta_d,
\qquad
(\xi_F)_{F\in\mathcal S(P)}.
\]

The total degree-zero element
\[
\sum_{i=1}^d\zeta_i\otimes z_i
+
\sum_{F\in\mathcal S(P)}\xi_F\otimes t_F
\in
T^1_{X_P}\otimes_{\CC}
\mathfrak m_{\mathrm{prim}}/
\mathfrak m_{\mathrm{prim}}^2
\]
determines a graded first-order deformation
$
\mathcal X_{\mathrm{prim}}^{(1)}
\longrightarrow
\operatorname{Spec}R_{\mathrm{prim}}^{(1)}.
$

Suppose inductively that
\(\mathcal X_{\mathrm{prim}}^{(k)}\) has been constructed.
The obstruction
to lifting it to \(R_{\mathrm{prim}}^{(k+1)}\) is a sum of terms
\[
\omega_{\boldsymbol\alpha,\mathbf a}
\otimes
z_1^{\alpha_1}\cdots z_d^{\alpha_d}
\prod_{F\in\mathcal S(P)}t_F^{a_F},
\]
where
\[
\omega_{\boldsymbol\alpha,\mathbf a}
\in
T^2_{X_P}\left(
-
\left(
|\boldsymbol\alpha|R^*
+
\sum_{F\in\mathcal S(P)}a_Fm_F
\right)
\right)
\]
and
\[
|\boldsymbol\alpha|
+
\sum_Fa_F
=
k+1.
\]

By Proposition~\ref{th gen}, this coefficient can be nonzero only if
\begin{equation}
\label{eq primitive obstruction degree}
|\boldsymbol\alpha|R^*
+
\sum_{F\in\mathcal S(P)}a_Fm_F
=
R^*.
\end{equation}

Suppose first that
$
|\boldsymbol\alpha|\geq1.
$
{Equation
\eqref{eq primitive obstruction degree} is equivalent to} 
\[
\bigl(|\boldsymbol\alpha|-1\bigr)R^*
+
\sum_{F\in\mathcal S(P)}a_Fm_F
=
0.
\]
All the terms in this sum belong to the strongly convex cone
\(\sigma^\vee\). Hence every term is zero. Since
$R^*\neq0$ and
$m_F\neq0$,
we obtain
$
|\boldsymbol\alpha|=1$
and
$a_F=0$
for every \(F\). This gives a monomial of ordinary order one, whereas
an obstruction to lifting
\(R_{\mathrm{prim}}^{(k)}\) to
\(R_{\mathrm{prim}}^{(k+1)}\) has ordinary order \(k+1\geq2\).
Consequently, no obstruction term can involve one of the variables
\(z_i\).

It remains to consider monomials of the form
\[
\prod_{F\in\mathcal S(P)}t_F^{a_F}.
\]
Their degree is \(R^*\) precisely when
\[
\sum_{F\in\mathcal S(P)}
a_F(R^*-s_F)
=
R^*.
\]
By Lemma~\ref{lem square relation}, this can occur only when there are
two parallel square facets \(F_1,F_2\) such that
\[
a_{F_1}=a_{F_2}=1,
\qquad
a_F=0
\quad
\text{for }F\notin\{F_1,F_2\},
\]
and
$
s_{F_1}+s_{F_2}=R^*.
$
Thus the only possible obstruction monomial is
$
t_{F_1}t_{F_2}.
$
In particular, such an obstruction can occur only in the passage from
first to second order.

Let
$
\omega_{F_1,F_2}
\in T^2_{X_P}(-R^*)
$
denote the coefficient of this monomial in the second-order
obstruction. Put
\[
B:=\CC[[u_1,u_2]],
\qquad
\mathfrak n:=(u_1,u_2),
\qquad
B^{(k)}:=B/\mathfrak n^{k+1}.
\]
Consider the graded homomorphism
$
R_{\mathrm{prim}}^{(1)}
\longrightarrow
B^{(1)}
$
given by
\[
t_{F_1}\longmapsto u_1,
\qquad
t_{F_2}\longmapsto u_2,
\]
and by sending all the remaining variables to zero.

By Corollary~\ref{cor parallel squares simultaneous deformation},
there is a formal two-parameter deformation over \(B\) whose
Kodaira--Spencer classes in the \(u_1\)- and \(u_2\)-directions are
\(\xi_{F_1}\) and \(\xi_{F_2}\), respectively. Its first-order
truncation therefore represents the pullback of
\(\mathcal X_{\mathrm{prim}}^{(1)}\), while its second-order
truncation provides a lifting of that first-order deformation to
\(B^{(2)}\).

By functoriality of obstruction classes, the pullback of the
second-order obstruction is consequently zero. On the other hand,
this pullback is
\[
\omega_{F_1,F_2}\otimes u_1u_2
\in
T^2_{X_P}(-R^*)\otimes_{\CC}
\mathfrak n^2/\mathfrak n^3.
\]
Since
\[
u_1u_2\neq0
\qquad\text{in}\qquad
\mathfrak n^2/\mathfrak n^3,
\]
it follows that
$
\omega_{F_1,F_2}=0.
$

Equivalently, after choosing compatible parameter coordinates, the
comparison homomorphism has the form
\[
t_{F_i}\longmapsto u_i+O(\mathfrak n^2),
\qquad i=1,2.
\]
Therefore
\[
t_{F_1}t_{F_2}
\longmapsto
u_1u_2
\pmod{\mathfrak n^3},
\]
so a higher-order change of parameter cannot alter the coefficient of
the mixed quadratic term.

We have proved that every second-order obstruction vanishes. For
orders at least three, the degree calculation above shows that there
is no monomial whose coefficient could lie in a nonzero homogeneous
component of \(T^2_{X_P}\). We may therefore construct inductively
compatible graded liftings
\[
\mathcal X_{\mathrm{prim}}^{(k+1)}
\longrightarrow
\operatorname{Spec}R_{\mathrm{prim}}^{(k+1)}
\]
for all \(k\geq1\). 
{For a complete local ring \((R,\mathfrak m)\), we write
\(\operatorname{Spf}R\) for its formal spectrum, namely the affine
formal scheme determined by the infinitesimal thickenings
\(\operatorname{Spec}(R/\mathfrak m^{k+1})\), \(k\geq0\).}
Passing to the inverse limit gives a formal
deformation
$
\mathcal X_{\mathrm{prim}}
\longrightarrow
\operatorname{Spf}R_{\mathrm{prim}}
$
whose Kodaira--Spencer map is an isomorphism onto
\[
V_{\mathrm{prim}}
=
T^1_{X_P}(-R^*)
\oplus
\bigoplus_{F\in\mathcal S(P)}
T^1_{X_P}\bigl(-(R^*-s_F)\bigr).
\]

We next incorporate the remaining homogeneous tangent directions.
For every square facet \(F\), let
$
x_F\in\CC[S_P]
$
denote the character \(\chi^{s_F}\). For \(n\geq1\), put
$
r_{F,n}:=R^*-ns_F.
$
By \eqref{eq square height one},
\[
\langle r_{F,n},(v,1)\rangle
=
\begin{cases}
1,   & v\in F,\\
1-n, & v\notin F.
\end{cases}
\]
It follows directly from the definition of the sets \(K_\tau^r\)
that
$
K_\tau^{r_{F,n}}
=
K_\tau^{r_{F,1}}
$
for every face \(\tau\preceq\sigma\). Indeed, at a vertex of \(F\)
the upper bound is one for every \(n\), while at a vertex outside
\(F\) it is nonpositive, so the corresponding \(K\)-set is empty.

By \cite[Theorem~3.3]{AS98}, multiplication by \(x_F^{n-1}\) is
induced on the complexes computing \(T^1_{X_P}\) by the inclusions
$
K_\tau^{r_{F,n}}
\subseteq
K_\tau^{r_{F,1}}.
$
Since these inclusions are equalities, multiplication induces an
isomorphism
\[
x_F^{n-1}\cdot-
\colon
T^1_{X_P}\bigl(-(R^*-s_F)\bigr)
\xrightarrow{\ \sim\ }
T^1_{X_P}\bigl(-(R^*-ns_F)\bigr).
\]
Consequently,
\begin{equation}
\label{eq higher tangent generator}
T^1_{X_P}\bigl(-(R^*-ns_F)\bigr)
=
\CC\,x_F^{n-1}\xi_F.
\end{equation}

We shall use the following flatness lemma for the coordinate-dependent
substitution below.

\begin{lemma}\label{lem coordinate dependent substitution}
Let \((B,\mathfrak m_B)\) and \((C,\mathfrak m_C)\) be complete local
Noetherian \(\CC\)-algebras with residue field \(\CC\), let
\(\mathcal P_0\) be a polynomial \(\CC\)-algebra, and put
\[
\mathcal P_B:=B\widehat\otimes_{\CC}\mathcal P_0,
\qquad
\mathcal P_C:=C\widehat\otimes_{\CC}\mathcal P_0.
\]
Suppose that
\[
\mathcal A_B
=
\mathcal P_B/(G_1,\ldots,G_e)
\]
is \(B\)-flat. Then one can choose finitely many syzygies
\[
V_1,\ldots,V_b\in\mathcal P_B^{\oplus e},
\qquad
(G_1,\ldots,G_e)V_j=0,
\]
whose reductions modulo \(\mathfrak m_B\) generate all syzygies among
the reduced equations \(g_1,\ldots,g_e\).

Let
\(\Phi\colon\mathcal P_B\to\mathcal P_C\) be a continuous
\(\CC\)-algebra homomorphism such that
\[
\Phi(\mathfrak m_B\mathcal P_B)
\subseteq
\mathfrak m_C\mathcal P_C
\]
and whose reduction modulo the maximal ideals is the identity on
\(\mathcal P_0\). Put
\[
G_i':=\Phi(G_i),
\qquad
V_j':=\Phi(V_j).
\]
If \((G_1',\ldots,G_e')V_j'=0\) for every \(j\), then
\[
\mathcal P_C/(G_1',\ldots,G_e')
\]
is \(C\)-flat.
\end{lemma}

\begin{proof}
Set \(I_B:=(G_1,\ldots,G_e)\subset P_B\), and let
\(K_B:=\ker(P_B^{\oplus e}\to I_B)\), where the \(i\)-th basis vector
maps to \(G_i\). By construction, \(P_B\) is flat over \(B\), while
\(\mathcal A_B=P_B/I_B\) is flat over \(B\) by assumption. The exact sequence
\[
0\longrightarrow I_B\longrightarrow P_B\longrightarrow \mathcal A_B
\longrightarrow0
\]
therefore implies that \(I_B\) is flat over \(B\).

Tensoring this sequence and
\[
0\longrightarrow K_B\longrightarrow P_B^{\oplus e}
\longrightarrow I_B\longrightarrow0
\]
with \(B/\mathfrak m_B\cong\CC\), and using the flatness of \(\mathcal A_B\)
and \(I_B\), gives
\[
K_B/\mathfrak m_BK_B
\cong
\ker\bigl(P_0^{\oplus e}\longrightarrow I_0\bigr),
\qquad
I_0:=(g_1,\ldots,g_e).
\]
Since \(P_0\) is noetherian, the module on the right is finitely
generated. Choose generators and lift them to syzygies
\(V_1,\ldots,V_b\in K_B\). Their reductions modulo
\(\mathfrak m_B\) generate all syzygies among \(g_1,\ldots,g_e\).
If the data are graded, the generators and their lifts may be chosen
homogeneous.

Put \(\mathcal A_C:=P_C/(G_1',\ldots,G_e')\). Since
\(G_i'=\Phi(G_i)\) and \(V_j'=\Phi(V_j)\), we have
\((G_1',\ldots,G_e')V_j'=0\). Moreover, because \(\Phi\) reduces to
the identity on \(P_0\), the reductions of the \(G_i'\) and \(V_j'\)
are \(g_i\) and \(V_j\bmod\mathfrak m_B\), respectively.

For \(q\geq0\), put \(C_q:=C/\mathfrak m_C^{q+1}\) and
\(P_q:=P_C/\mathfrak m_C^{q+1}P_C\), and let \(I_q\subset P_q\) be
the ideal generated by the images of \(G_1',\ldots,G_e'\). We claim
that \(A_q:=P_q/I_q\) is flat over \(C_q\).

Let \(h\in I_q\cap\mathfrak m_CP_q\), and write
\(h=(G_1',\ldots,G_e')c\) for some \(c\in P_q^{\oplus e}\). Reducing
modulo \(\mathfrak m_C\), we obtain the syzygy
\((g_1,\ldots,g_e)\overline c=0\). Hence
\(\overline c=\sum_{j=1}^b\overline d_j\,\overline V_j\) for suitable
\(\overline d_j\in P_0\). After choosing lifts \(d_j\in P_q\), we have
\(c-\sum_jd_jV_j'\in\mathfrak m_CP_q^{\oplus e}\). Since
\((G_1',\ldots,G_e')V_j'=0\), it follows that
\(h\in\mathfrak m_CI_q\). Therefore
\[
I_q\cap\mathfrak m_CP_q=\mathfrak m_CI_q.
\]
Since \(A_q/\mathfrak m_CA_q\cong P_0/I_0\) is a \(\CC\)-vector space,
the local criterion for flatness over the Artinian local ring \(C_q\)
shows that \(A_q\) is flat over \(C_q\). This is the presentation form
of the equational flatness criterion; see, e.g.,
\cite[Lemma~3.5]{Fil26}.

Finally, \(P_C\) is noetherian and \(\mathfrak m_C\)-adically
complete. Hence the finitely generated ideal
\((G_1',\ldots,G_e')\) is closed, \(\mathcal A_C\) is
\(\mathfrak m_C\)-adically complete, and
\[
\mathcal A_C\cong\varprojlim_q A_q.
\]
The transition maps \(A_{q+1}\to A_q\) are surjective, and every
\(A_q\) is flat over \(C_q\). The inverse-limit criterion for
flatness over a noetherian ring therefore implies that \(\mathcal A_C\) is
flat over \(C\).
\end{proof}

We now perform the substitution at the level of equations and
relations. Let
\[
\mathcal P_0:=\CC[\bfx,u],
\qquad
A_0:=\CC[S_P]=\mathcal P_0/I_P,
\]
and put
\[
\mathcal P_{\mathrm{prim}}
:=
R_{\mathrm{prim}}
\widehat\otimes_{\CC}\mathcal P_0.
\]
After lifting the ambient coordinate functions to the primitive family,
write
\[
\mathcal A_{\mathrm{prim}}
=
\mathcal P_{\mathrm{prim}}/I_{\mathrm{prim}}.
\]
The ring \(\mathcal P_{\mathrm{prim}}\) is Noetherian, so we may choose
finite homogeneous generators
\[
G_1,\ldots,G_e
\]
of \(I_{\mathrm{prim}}\). Since
\(\mathcal A_{\mathrm{prim}}\) is flat over \(R_{\mathrm{prim}}\), their
reductions \(g_1,\ldots,g_e\) generate \(I_P\). Choose finitely many
homogeneous syzygies among the \(G_i\) as in
Lemma~\ref{lem coordinate dependent substitution}, and let
\(v_{\mathrm{prim}}\) be the matrix whose columns are these syzygies.
Let \(u_{\mathrm{prim}}\) be the row matrix with components \(G_i\), and
let \(u_0,v_0\) denote their reductions modulo
\(\mathfrak m_{\mathrm{prim}}\). We then have a homogeneous complex
\begin{equation}
\label{eq primitive finite presentation}
\mathcal P_{\mathrm{prim}}^{\oplus b}
\xrightarrow{\,v_{\mathrm{prim}}\,}
\mathcal P_{\mathrm{prim}}^{\oplus e}
\xrightarrow{\,u_{\mathrm{prim}}\,}
\mathcal P_{\mathrm{prim}}
\longrightarrow
\mathcal A_{\mathrm{prim}}
\longrightarrow0,
\end{equation}
with
$
u_{\mathrm{prim}}\circ v_{\mathrm{prim}}=0,
$
whose reduction is the finite homogeneous presentation
\begin{equation}
\label{eq central finite presentation}
\mathcal P_0^{\oplus b}
\xrightarrow{\,v_0\,}
\mathcal P_0^{\oplus e}
\xrightarrow{\,u_0\,}
\mathcal P_0
\longrightarrow
A_0
\longrightarrow0.
\end{equation}
In particular, the columns of \(v_0\) generate all relations among the
central equations \(g_i\).

For every \(F\in\mathcal S(P)\), choose a homogeneous polynomial, still denoted by
\(x_F\), in \(\mathcal P_0\) whose image in \(A_0\) is
\(\chi^{s_F}\). For \(N\geq1\), introduce parameters
\[
t_{F,n},
\qquad
F\in\mathcal S(P),
\quad
1\leq n\leq N,
\]
of degree
$
\deg(t_{F,n})=R^*-ns_F,
$
and put
\[
R_N
:=
\CC\Bigl[\!\Bigl[
z_1,\ldots,z_d,
(t_{F,n})_{\substack{F\in\mathcal S(P)\\1\leq n\leq N}}
\Bigr]\!\Bigr].
\]
Set
\[
\widehat t_F^{(N)}
:=
\sum_{n=1}^N x_F^{n-1}t_{F,n}.
\]
Every summand is homogeneous of degree
\[
\deg\bigl(x_F^{n-1}t_{F,n}\bigr)
=
(n-1)s_F+(R^*-ns_F)
=
R^*-s_F
=
\deg(t_F).
\]

Let
$
\mathcal P_N
:=
R_N\widehat\otimes_{\CC}\mathcal P_0.
$
There is a continuous homogeneous \(\CC\)-algebra homomorphism
\[
\Phi_N\colon
\mathcal P_{\mathrm{prim}}
\longrightarrow
\mathcal P_N
\]
which is the identity on \(\mathcal P_0\) and on the variables \(z_i\),
and satisfies
$
\Phi_N(t_F)=\widehat t_F^{(N)}.
$
Notice that \(\Phi_N\) is not asserted to be a base change
homomorphism: the coefficients \(x_F^{n-1}\) belong to the ambient
coordinate ring.

Apply \(\Phi_N\) to both matrices in
\eqref{eq primitive finite presentation}, and set
\[
u_N:=\Phi_N(u_{\mathrm{prim}}),
\qquad
v_N:=\Phi_N(v_{\mathrm{prim}}).
\]
Then
\[
u_N\circ v_N
=
\Phi_N(u_{\mathrm{prim}}\circ v_{\mathrm{prim}})
=
0.
\]
Moreover, modulo the maximal ideal of \(R_N\), the resulting complex
reduces to \eqref{eq central finite presentation}. Thus
\[
\mathcal P_N^{\oplus b}
\xrightarrow{\,v_N\,}
\mathcal P_N^{\oplus e}
\xrightarrow{\,u_N\,}
\mathcal P_N
\longrightarrow
\mathcal A_N
\longrightarrow0,
\qquad
\mathcal A_N:=\mathcal P_N/\operatorname{im}(u_N),
\]
is a complex which is exact except possibly at
\(\mathcal P_N^{\oplus e}\), and whose reduction over the closed point
is exact.

The homomorphism \(\Phi_N\) preserves the parameter ideals and induces
the identity on the closed fibre. The equality
\(u_N\circ v_N=0\) shows that all substituted syzygies still vanish,
and their reductions generate all syzygies among the central equations.
Lemma~\ref{lem coordinate dependent substitution} therefore implies
that \(\mathcal A_N\) is flat over \(R_N\). Hence it defines a formal
deformation
$
\mathcal X_N\longrightarrow\operatorname{Spf}R_N
$
of \(X_P\). 
Writing
$
G_i^{(N)}:=\Phi_N(G_i),
$
we have
\[
\left.
\frac{\partial G_i^{(N)}}{\partial t_{F,n}}
\right|_{\mathbf z=\mathbf t=0}
=
x_F^{n-1}
\left.
\frac{\partial G_i}{\partial t_F}
\right|_{\mathbf z=\mathbf t=0}.
\]
By the \(\CC[S_P]\)-module structure on \(T^1_{X_P}\), this gives
\[
\KS\left(\frac{\partial}{\partial t_{F,n}}\right)
=
x_F^{n-1}\xi_F.
\]
By \eqref{eq higher tangent generator}, this is a generator of
$
T^1_{X_P}\bigl(-(R^*-ns_F)\bigr).
$
The \(z_i\)-directions map to the chosen basis
\(\zeta_1,\ldots,\zeta_d\). Consequently, the Kodaira--Spencer map of
\(\mathcal X_N\) is an isomorphism onto
\[
V_N
:=
T^1_{X_P}(-R^*)
\oplus
\bigoplus_{F\in\mathcal S(P)}
\bigoplus_{1\leq n\leq N}
T^1_{X_P}\bigl(-(R^*-ns_F)\bigr).
\]

The families \(\mathcal X_N\) are compatible as \(N\) varies:
\(\mathcal X_{N+1}\) restricts to \(\mathcal X_N\) after setting
$
t_{F,N+1}=0
$
for every \(F\). Furthermore,
\[
T^1_{X_P}
=
\bigcup_{N\geq1}V_N.
\]

We claim that every deformation of \(X_P\) over a local Artinian
\(\CC\)-algebra \(B\) is induced from \(\mathcal X_N\) for some
\(N\).

We prove this by induction on the length of \(B\). The assertion is
clear for \(B=\CC\). Choose a small extension
\[
0\longrightarrow J
\longrightarrow B
\longrightarrow\overline B
\longrightarrow0
\]
and let \(\mathcal Y\) be a deformation of \(X_P\) over \(B\).
By induction, its restriction
$
\overline{\mathcal Y}
:=
\mathcal Y\otimes_B\overline B
$
is induced from \(\mathcal X_N\) by a homomorphism
$
\varphi_N\colon R_N\longrightarrow\overline B
$
for some \(N\).

Since \(R_N\) is a formal power-series ring in finitely many
variables, \(\varphi_N\) lifts to a homomorphism
\[
\widetilde\varphi_N\colon R_N\longrightarrow B.
\]
Let \(\mathcal Y_0\) be the corresponding pullback of
\(\mathcal X_N\). Then \(\mathcal Y\) and \(\mathcal Y_0\) are two
liftings of \(\overline{\mathcal Y}\) across the same small extension.
Their difference is represented by an element
$
\delta\in T^1_{X_P}\otimes_{\CC}J.
$

Since \(T^1_{X_P}\) is the algebraic direct sum of its homogeneous
components, \(\delta\) has finite homogeneous support. Hence there
exists \(N'\geq N\) such that
$
\delta\in V_{N'}\otimes_{\CC}J.
$
Regard \(\widetilde\varphi_N\) as a homomorphism
$
R_{N'}\longrightarrow B
$
by sending the additional parameters to zero. Since the
Kodaira--Spencer map of \(\mathcal X_{N'}\) is an isomorphism onto
\(V_{N'}\), there exists
\[
\lambda\in
\operatorname{Hom}_{\CC}
\left(
\mathfrak m_{R_{N'}}/
\mathfrak m_{R_{N'}}^2,
J
\right)
\]
whose image under the Kodaira--Spencer map is \(\delta\).

Because the extension is small, one has
$
\mathfrak m_BJ=0.
$
We may therefore modify the parameter homomorphism by setting
\[
\widetilde\varphi_{N'}'(q)
=
\widetilde\varphi_{N'}(q)+\lambda(\overline q)
\]
for each parameter \(q\), where \(\overline q\) denotes its class in
\(\mathfrak m_{R_{N'}}/\mathfrak m_{R_{N'}}^2\). This again defines a
local homomorphism
\[
\widetilde\varphi_{N'}'
\colon
R_{N'}\longrightarrow B
\]
lifting the original homomorphism to \(\overline B\). The corresponding
change of the pullback deformation is exactly \(\delta\). Hence the
pullback defined by \(\widetilde\varphi_{N'}'\) is isomorphic to
\(\mathcal Y\), proving the claim.

Finally, let
$
B'\longrightarrow B
$
be a small extension of local Artinian \(\CC\)-algebras and let
\(\mathcal Y\) be a deformation of \(X_P\) over \(B\). By the claim,
\(\mathcal Y\) is induced from \(\mathcal X_N\), for some \(N\), by a
homomorphism
$$
R_N\longrightarrow B.
$$
Since \(R_N\) is a formal power-series ring in finitely many variables,
this homomorphism lifts to a homomorphism
$
R_N\longrightarrow B'.
$
Pulling back \(\mathcal X_N\) gives a lifting of \(\mathcal Y\) to
\(B'\). Therefore
\[
\Def_{X_P}(B')
\longrightarrow
\Def_{X_P}(B)
\]
is surjective for every small extension \(B'\to B\). Thus
\(\Def_{X_P}\) is formally smooth, and \(X_P\) is unobstructed.
\end{proof}

\end{document}